\documentclass[12pt]{amsart}
\usepackage[utf8]{inputenc}
\textwidth=16cm
\oddsidemargin=.3cm
\evensidemargin=.3cm 
\setlength{\textheight}{21 cm}      
\usepackage[english]{babel}
\usepackage{amsmath, amssymb, amsthm, amscd,color,comment}
\usepackage{cancel}
\usepackage{cite}
\usepackage{alltt}
\usepackage[dvipsnames]{xcolor}
\usepackage{array}
\usepackage[small,bf,labelsep=period]{caption}
\usepackage{mathtools}
\usepackage{hyperref}

\usepackage{pgfplots}
\pgfplotsset{compat=1.15}
\usepackage{mathrsfs}
\usetikzlibrary{arrows}

\usepackage{enumerate}
\newtheorem{theorem}{Theorem}[section]
\newtheorem{definition}[theorem]{Definition}

\newtheorem{remark}[theorem]{Remark}
\newtheorem{lemma}[theorem]{Lemma}
\newtheorem{corollary}[theorem]{Corollary}
\newtheorem{proposition}[theorem]{Proposition}

\DeclarePairedDelimiter\ceil{\lceil}{\rceil}
\DeclarePairedDelimiter\floor{\lfloor}{\rfloor}

\def\la{\lambda}
\def\a{\alpha}
\def\b{\beta}

\def\uu{{\bf u}}
\def\vv{{\bf v}}
\def\w{{\bf w}}
\def\x{{\bf x}}
\def\y{{\bf y}}
\def\z{{\bf z}}

\def\F{\mathbf F}
\def\P{\mathbf P}
\def\N{\mathbb N}
\def\R{\mathbb R}
\def\Z{\mathbb Z}
\def\cC{\mathcal C}
\def\cD{\mathcal D}
\def\cF{\mathcal F}
\def\cH{\mathcal H}
\def\cI{\mathcal I}
\def\cJ{\mathcal J}
\def\cL{\mathcal L}
\def\cl{\mathcal l}
\def\cP{\mathcal P}
\def\cM{\mathcal M}
\def\cO{\mathcal O}
\def\cK{\mathcal K}
\def\cG{\mathcal G}
\def\cW{\mathcal W}
\def\cX{\mathcal X}
\def\cY{\mathcal Y}
\def\l{\ell}
\def\a{\alpha}
\def\e{\epsilon}
\def\fqss{\mathbb F_{q^6}}
\def\fqs{\mathbb F_{q^2}}
\def\fqc{\mathbb F_{q^3}}
\def\fqn{\mathbb F_{q^n}}
\def\fqsn{{\mathbb F}_{q^{2n}}}
\def\fq{\mathbb F_q}
\def\fp{\mathbf F_p}
\def\fix{{\rm Fix}}
\def\sq{\sqrt{q}}
\def\aut{{\rm Aut}}
\def\gal{{\rm Gal}}
\def\supp{{\rm Supp}}
\def\div{{\rm div}}
\def\Div{{\rm Div}}
\def\dim{{\rm dim}}
\def\deg{{\rm deg}}
\def\ord{{\rm ord}}
\def\lub{{\rm lub}}
\def\glb{{\rm glb}}
\def\supp{{\rm supp}}
\def\res{{\rm res}}
\def\frx{{\mathbf Fr}_{\mathcal X}}
\def\a{\alpha}
\def\char{\mbox{\rm Char}}
\newcommand{\Mod}[1]{\ (\mathrm{mod}\ #1)}

\newcommand{\lu}[1]{{{\color{red}#1}}}
\newcommand{\alo}[1]{{{\color{purple}#1}}}
\newcommand{\er}[1]{{{\color{blue}#1}}}

\newcommand{\al}{\alpha}
\newcommand{\be}{\beta}
\newcommand{\g}{\gamma}

\begin{document}

\title[Weierstrass Semigroups, pure gaps and Codes on Function Fields]{Weierstrass Semigroups, pure gaps and Codes on Function Fields}

\thanks{{\bf Keywords}: Kummer extensions, Weierstrass semigroups, Pure gaps, AG codes}

\thanks{{\bf Mathematics Subject Classification (2010)}: 94B27, 11G20, 14H55}

\thanks{The first author was partially supported by FAPEMIG. The second author was partially supported by FAPERJ/RJ-Brazil 201.650/2021. The third author thanks FAPERJ 260003/001703/2021 - APQ1,
CNPQ PQ 302727/2019-1  and
CAPES MATH AMSUD 88881.647739/2021-01 for the partial support.}

\author{Alonso S. Castellanos, Erik A. R. Mendoza, and Luciane Quoos}

\address{Faculdade de Matemática, Universidade Federal de Uberlândia, Campus Santa Mônica, CEP 38400-902, Uberlândia, Brazil}
\email{alonso.castellanos@ufu.br}

\address{Instituto de Matemática, Universidade Federal do Rio de Janeiro, Cidade Universitária,
	CEP 21941-909, Rio de Janeiro, Brazil}
\email{erik@im.ufrj.br}

\address{Instituto de Matemática, Universidade Federal do Rio de Janeiro, Cidade Universitária,
	CEP 21941-909, Rio de Janeiro, Brazil}
\email{luciane@im.ufrj.br}

\begin{abstract} 
For an arbitrary function field, from the knowledge of the minimal generating set of the Weierstrass semigroup at two rational places, the set of pure gaps is characterized. Furthermore, we determine the Weierstrass semigroup at one and two totally ramified places in a Kummer extension defined by the affine equation $y^{m}=\prod_{i=1}^{r} (x-\a_i)^{\lambda_i}$ over $K$, the algebraic closure of $\mathbb{F}_q$, where $\al_1, \dots, \al_r\in K$ are pairwise distinct elements, $1\leq \lambda_i < m$, and $\gcd(m, \sum_{i=1}^{r}\la_i)=1$. We apply these results to construct algebraic geometry codes over certain function fields with many rational places. For one-point codes we obtain families of codes with exact parameters.
\end{abstract}

\maketitle

\section{Introduction}
Algebraic function fields over a finite field with many rational places with respect to their genera have attracted a lot of attention in the context of error-correcting codes in the last decades. The so-called {\it algebraic geometry codes} (AG codes), introduced by Goppa \cite{G1977}, relies on a function field in one variable over a finite field $\fq$ with $q$ elements.

The Goppa's construction of linear codes over a function field $F/\fq$ of genus $g$ is described as follows. Let $D=P_1+\cdots +P_N$ and $G$ be divisors in $F/\fq$, where $P_1,\ldots, P_N$ are pairwise distinct rational places in $F$ and  such that $\supp(D)\;\cap\; \supp(G)=\emptyset$. Associated to the divisors $D$ and $G$ we have the linear algebraic geometry code
$$
C_{\mathcal{L}}(D,G)=\{(f(P_1),\dots,f(P_N))\mid f\in \cL(G)\}\subseteq \mathbb{F}_q^N,
$$
where $
\cL(G)=\{z\in F\mid (z)+G\geq 0\}\cup \{0\}
$ is the Riemann-Roch vector space associated to the divisor $G$. A code is said to be a one or two-point AG code if the divisor $G$ has support in one or two points respectively. The parameters: length equal to $N$, dimension $k$, and minimum distance $d$ of an AG code depend on the number of rational places, the genus $g$ of the function field, and the dimension of certain Riemann-Roch spaces. For the number $N(F)$ of rational places over $\fq$ on the function field $F/\fq$ we have the Hasse-Weil bound
\begin{equation}\label{maximal}
N(F) \leq q+1+2g\sqrt{q}.
\end{equation}
This is a deep result due to Hasse for elliptic curves, and due to A. Weil for general curves. We say that a function field (or the algebraic curve associated to it) is {\it maximal} if it achieves the Hasse-Weil bound. In particular, maximal function fields just exist over $\fqs$. The use of maximal function fields is indeed one of the main ingredients in the literature to construct AG codes with good parameters. Many examples of explicit maximal curves are described in \cite{AG2004, BM2018, CT2016, G2004, MQ2022} and references therein. It is worth to point out that several of the maximal curves admit a plane realization in the projective space $\mathbb{P}^2(\fq)$ as a Kummer extension, which is the central object of this work. 

Let $K$ be the algebraic closure of $\fq$, a finite field with characteristic $p$. A {\it Kummer extension} is defined by an affine equation 
\begin{equation}\label{curveX1}
\cX: \quad y^{m}:=f(x)=\prod_{i=1}^{r} (x-\a_i)^{\lambda_i}, \quad \la_i\in \N, \quad  1\leq \la_i< m \quad \text{and} \quad p\nmid m,
\end{equation}
where $\al_1, \dots, \al_r\in K$ are pairwise distinct elements, $\la_0:=\sum_{i=1}^{r}\la_i$, and $\gcd(m, \la_0)=1$. 
The investigation of Kummer extensions has attracted attention in recent years, see \cite{CMQ2016, MSW2021, GN2022, YH2018, YH2017, HY2018} for codes and semigroups, see \cite{CHT2016} for towers of function fields, and see \cite{GOR2019, CQT2022} for sequences over finite fields with high non-linear complexity.

An important object on the study of the theory of AG codes is related with the local structure of a function field, the Weierstrass semigroup at one or two rational places $P_1, P_2$, defined by
$$
H(P_1)=\{s\in \N\mid (z)_{\infty}=sP_1\text{ for some }z\in F\}
$$
and
$$
H(P_1, P_2)=\{(s_1, s_2)\in \N^2\mid (z)_{\infty}=s_1P_1+s_2P_2\text{ for some }z\in F\},
$$ 
respectively. The complement sets $G(P_1):=\N \setminus H(P_1)$ and $G(P_1, P_2):=\N^2 \setminus H(P_1, P_2)$ are called the set of gaps.

Weierstrass semigroups have been used to analyze the minimum distance, redundancy, and construction of AG codes with good parameters, see e.g. \cite{PT1999, CF2005, DK2011, G2001, G2004}. Explicit constructions of AG codes (in one and two points) over maximal curves can be found in \cite{DK2011} for the Hermitian curve, in \cite{CMQ2016,ST2014} for a generalization of the Hermitian curve, in \cite{CT2016} for codes over the $GK$ curve, and in \cite{LV2022} for the $BM$ curve. Many of the obtained constructions have examples of codes with good parameters. 

In 2001, Homma and Kim \cite{HK2001} investigated two-point codes over the Hermitian curve and introduced the very nice concept of {\it pure gaps} which turned out to be very useful for the improvement of the minimum distance of an AG code, see Theorem \ref{TorresCarvalho}. These ideas were generalized to many rational points by Carvalho and Torres, see \cite{CF2005}, and applied in recent publications such as \cite{BD2022} and \cite{HY2018}.

Kummer extensions as in (\ref{curveX1}) where all the multiplicities $\la_1=\la_2=\dots=\la_r$ are equal have been an object of interest concerning the theory of Weierstrass semigroups and codes, see \cite{HK2001, MSW2021} and \cite{YH2018}. In this case, for two totally ramified places in the Kummer extension, the Weierstrass semigroup and the minimal generating set was completely determined, see \cite{ CMQ2016, BQZ2018} and \cite{YH2017}.

The study of Kummer extensions as in (\ref{curveX1}) where not all the multiplicities $\la_1, \la_2, \dots, \la_r$ are  equal is a challenging problem and was just recently explored. In this case, for a totally ramified place $P$ in the Kummer extension, the authors in \cite{ABQ2019} provided an arithmetical criterion to determine if a positive integer is an element of the gap set $G(P)$. In \cite{M2022}, Mendoza explicitly describes the Weierstrass semigroup and the gap set at the only place at infinity.

In this work, we explore  Kummer extensions as described in (\ref{curveX1}). We provide an explicit description of the gap set at any totally ramified place and determine the minimal generating set at two totally ramified places. We apply the obtained results to construct one-point AG codes with exact parameters. In particular, we obtain a family of AG codes with Singleton defect $\delta=N-k-d=2$, see Remark \ref{singleton2}.  

For two rational places in an arbitrary function field, we present a characterization of the pure gap set in terms of the minimal generating set of the Weierstrass semigroup, see Proposition \ref{Gammasettopuregaps}. This characterization was very helpful for the applications. As a consequence, we determine pure gaps in two rational places over a maximal curve and construct codes with good parameters from Theorem \ref{TorresCarvalho}, see Table \ref{table2}.

We organize the paper as follows. Section 2 contains general results from the function field theory, Weierstrass semigroups and basic facts related to AG codes. In Section 3, we characterize the pure gap set $G_0(P_1, P_2)$ at two rational places $P_1$ and $P_2$ from the minimal generating set $\Gamma(P_1, P_2)$ for any function field (see Proposition \ref{Gammasettopuregaps}). 
In Section 4, we provide an explicit description of the gap set $G(P)$ at any totally ramified place $P$ in a Kummer extension as in (\ref{curveX1})(see Propositions \ref{prop1}, \ref{prop2} and \ref{prop3}). We also compute the minimal generating set $\Gamma(P_1, P_2)$ at two totally ramified places $P_1$ and $P_2$ in a Kummer extension (see Proposition \ref{propmgs1} and \ref{propmgs2}). In Section 5, we apply the results in the previous section to construct one-point AG codes. More specifically, we construct one-point AG codes over a  general family of Kummer extensions (see Theorem \ref{AGcodeQneqQ_inf}). In the same section, AG codes over particular function fields with many rational places are constructed given in all cases the exact value of their parameters (see Corollaries \ref{propc1}, \ref{propc2}, \ref{propc3} and Proposition \ref{propc4}). In Section 6, using the characterization of pure gaps in two rational places given in Proposition \ref{Gammasettopuregaps}, we compute pure gaps at two rational places over a certain maximal function field and construct two-point AG codes (see Proposition \ref{puregaps}, and Propositions \ref{CodesinQinfQ} and \ref{CodesinQ0Q1} respectively). Finally, in Section 7 we compare the relative parameters of the two-point AG codes obtained in Section 6 with the parameters of one-point AG codes.

\section{Preliminaries and notation}

Throughout this article, we let $q$ be the power of a prime $p$, $\fq$ be the finite field with $q$ elements, and $K$ be the algebraic closure of $\fq$. For $a, b$ integers, we denote by $(a, b)$ the greatest common divisor of $a$ and $b$. For $c\in \R$ a real number, we denote by $\floor*{c}$, $\ceil*{c}$ and $\{c\}$ the floor, ceiling and fractional part functions of $c$ respectively. We also let $\N=\{0,1, \dots \}$ be the set of natural numbers.

\subsection{Function Fields and Weierstrass semigroups}

Let $F/K$ be a function field of one variable of genus $g=g(F)$. We denote by $\mathcal P_F$ the set of places in $F$, by $\Omega_F$ the space of differentials forms in $F$, by $\nu_{P}$ the discrete valuation of $F/K$ associated to the place $P\in \cP_F$, and by $\Div (F)$ the free abelian group generated by the places in $F$. An element in $\Div (F)$ is called a divisor. For a function $z \in F$ we let $(z)_F, (z)_\infty$ and $(z)_0$ stand for the principal, pole and zero divisors of the function $z$ in $F$ respectively. 

Given a divisor $G\in \Div(F)$ of $F/K$, we have the following two vector spaces associate to $G$, the Riemann-Roch space
$$
\cL(G)=\{z\in F\mid (z)_F+G\geq 0\}\cup \{0\}
$$
  with dimension $\ell(G)$ as vector space over $K$, 
  and the space of  differentials given by
 $$\quad  \Omega(G)=\{\omega\in \Omega_F \mid (\omega)_F\geq G\}\cup \{0\}.$$
 
Now we introduce the notion of Weierstrass semigroup that plays an important role in the study of codes. 
For a place $P\in \cP_{F}$, the {\it Weierstrass semigroup} at $P$  is defined by
$$
H(P)=\{s\in \N \mid (z)_{\infty}=sP\text{ for some }z\in F\}.
$$
We say that a non-negative integer $s$ is a non-gap at $P$ if $s\in H(P)$. An element in the complement set $G(P):= \N \setminus H(P)$ is called a gap  at $P$. For a function field $F/K$ of genus $g>0$, the number of gaps is always finite, in fact $\#G(P)=g$. 

The Weierstrass semigroup $H(P)$ at one place admits a generalization for two places. Let $P_1$ and $P_2$ be distinct places in $F$. We define the Weierstrass semigroup associated to  $P_1, P_2$ by
$$
H(P_1, P_2)=\{(s_1, s_2)\in \N^2\mid (z)_{\infty}=s_1P_1+s_2P_2\text{ for some }z\in F\}.
$$
Analogously as in the case of one place, the elements of the set $G(P_1, P_2):=\N^2\setminus H(P_1, P_2)$ are called gaps at $P_1, P_2$.  Gaps can be characterized using Riemann-Roch spaces, that is, a pair $(s_1, s_2)\in \N^2$ is a gap at $P_1, P_2$ if and only if 
\begin{equation*}
\ell\left(s_1P_1+s_2P_2\right)=\ell\left(s_1P_1+s_2P_2-P_j\right)\text{ for some }j\in \{1, 2\}.
\end{equation*}

The set of gaps at two places $P_1, P_2$ can be obtained from the gaps at $P_1$ and $P_2$ as it follows.
Suppose that  $G(P_1)=\{\be_1<\be_2<\dots < \be_g\}$ and $G(P_2)=\{\gamma_1<\gamma_1<\dots < \gamma_g\}$. For each $i$, we let $n_{\be_i}= \min\{\gamma \in \N \mid (\be_i, \gamma)\in H(P_1, P_2)\}$. From \cite[Lemma 2.6]{K1994}, we have the equality $\{n_{\be} \mid \be\in G(P_1)\} = G(P_2)$, and therefore there exists a permutation $\sigma$ of the set $\{1, 2,\dots, g\}$ such that $n_{\be_i}= \gamma_{\sigma (i)}$. The graph of the bijective map between $G(P_1)$ and $G(P_2)$ defining the permutation $\sigma$ is the set $\Gamma (P_1, P_2)=\{(\be_i, \gamma_{\sigma (i)}) \mid i=1, \dots, g\}$. The following lemma characterizes the set $\Gamma(P_1, P_2)$.

\begin{lemma}\cite[Lemma 2]{H1996}\label{lemma1}
Let $\Gamma$ be a subset of
$(G(P_1)\times G(P_2)) \cap H(P_1, P_2)$. If there exists a permutation $\tau$ of $\{1, 2,\dots, g\}$ such that $\Gamma= \{(\al_i, \be_{\tau(i)})\mid i=1, \dots, g\}$,
then $\Gamma=\Gamma(P_1, P_2)$.
\end{lemma}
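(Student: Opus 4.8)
```latex
The plan is to establish Lemma~\ref{lemma1} by showing that the hypothesized set $\Gamma$ must coincide with the canonically defined graph $\Gamma(P_1,P_2)$ described just before the statement. The key structural fact to exploit is the one recalled from \cite[Lemma 2.6]{K1994}: for each gap $\be\in G(P_1)$ there is a minimal value $n_\be=\min\{\gamma\in\N\mid(\be,\gamma)\in H(P_1,P_2)\}$, and the assignment $\be\mapsto n_\be$ is a bijection from $G(P_1)$ onto $G(P_2)$. The strategy is to prove that every pair $(\al_i,\be_{\tau(i)})$ in $\Gamma$ is forced to be exactly of the form $(\al_i,n_{\al_i})$, which immediately identifies $\Gamma$ with $\Gamma(P_1,P_2)$ since the latter is precisely $\{(\be,n_\be)\mid\be\in G(P_1)\}$.

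First I would fix notation by writing $G(P_1)=\{\al_1<\cdots<\al_g\}$ and $G(P_2)=\{\be_1<\cdots<\be_g\}$, so that the permutation $\tau$ encodes a bijection sending $\al_i$ to $\be_{\tau(i)}$. Since by hypothesis $\Gamma\subseteq H(P_1,P_2)$, each pair $(\al_i,\be_{\tau(i)})$ is a non-gap, hence by minimality of $n_{\al_i}$ we automatically get the inequality $n_{\al_i}\le\be_{\tau(i)}$ for every $i$. The heart of the argument is the reverse global comparison: because $\tau$ is a bijection of $\{1,\dots,g\}$, the multiset $\{\be_{\tau(i)}\mid i=1,\dots,g\}$ is exactly all of $G(P_2)$, and likewise $\{n_{\al_i}\mid i=1,\dots,g\}=G(P_2)$ by the bijectivity from \cite{K1994}. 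Thus both collections $\{n_{\al_i}\}$ and $\{\be_{\tau(i)}\}$ enumerate the same finite set $G(P_2)$ with the same total sum.

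From here the argument closes by a counting/summation step: since $n_{\al_i}\le\be_{\tau(i)}$ holds termwise while $\sum_{i=1}^g n_{\al_i}=\sum_{i=1}^g\be_{\tau(i)}$ (both equal $\sum_{\gamma\in G(P_2)}\gamma$), each individual inequality must in fact be an equality, giving $n_{\al_i}=\be_{\tau(i)}$ for all $i$. Therefore $\Gamma=\{(\al_i,\be_{\tau(i)})\mid i\}=\{(\al_i,n_{\al_i})\mid i\}=\Gamma(P_1,P_2)$, as desired. I expect the main obstacle to be justifying cleanly that the two enumerations of $G(P_2)$ have equal sums without circular reasoning; this rests entirely on $\tau$ being a genuine permutation and on the bijection $\be\mapsto n_\be$ of \cite[Lemma 2.6]{K1994}, so I would state both facts carefully before invoking the termwise-inequality-plus-equal-sum principle. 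A minor point worth verifying is that the finiteness $\#G(P_1)=\#G(P_2)=g$ (noted in the preliminaries) guarantees $\tau$ acts on a set of the correct size, so that the pairing is exhaustive and no gap of $P_2$ is omitted.
```
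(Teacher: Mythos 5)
Your argument is correct. The paper does not prove this lemma at all --- it is quoted verbatim from \cite[Lemma 2]{H1996} and used as a black box --- so there is no internal proof to compare against; your write-up supplies a genuine, self-contained justification. The three ingredients you isolate are exactly what is needed and each is sound: (i) $\Gamma\subseteq H(P_1,P_2)$ forces $n_{\al_i}\le \be_{\tau(i)}$ by the minimality defining $n_{\al_i}$; (ii) bijectivity of $\tau$ and of $\be\mapsto n_\be$ (the latter being \cite[Lemma 2.6]{K1994}, which the paper records) make $\{\be_{\tau(i)}\}_i$ and $\{n_{\al_i}\}_i$ two enumerations of the same finite set $G(P_2)$, hence of equal sum; (iii) termwise inequality plus equal sums gives termwise equality, and since the elements of $G(P_2)$ are distinct this pins down $\tau$ as the permutation $\sigma$ defining $\Gamma(P_1,P_2)$. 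There is no circularity in step (ii): both sums are computed against the fixed finite set $G(P_2)$, whose cardinality $g$ matches the index set of $\tau$, as you note. The only cosmetic caveat is that the lemma's statement recycles the letters $\al_i,\be_j$ with a meaning different from the surrounding paragraph (there $G(P_1)=\{\be_i\}$ and $G(P_2)=\{\gamma_j\}$); your explicit re-fixing of notation at the outset resolves this cleanly.
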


For $\x=(\be_1, \gamma_1)$ and $\y=(\be_2, \gamma_2)$, the least upper bound of $\x$ and $\y$ is defined as $\lub(\x, \y) =(\max\{\be_1, \be_2\}, \max\{\gamma_1, \gamma_2\})$. The following result shows that it is enough to determine $\Gamma(P_1, P_2)$ to compute the Weierstrass semigroup $H(P_1, P_2)$.

\begin{lemma}\cite[Lemma 2.2]{K1994}\label{genset}
Let $P_1$ and $P_2$ be two distinct places in $F$. Then 
$$
H(P_1, P_2)=\{\lub(\x, \y)\mid \x, \y\in \Gamma(P_1, P_2)\cup (H(P_1)\times \{0\})\cup (\{0\}\times H(P_2))\}.
$$
\end{lemma}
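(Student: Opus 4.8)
The plan is to prove the two inclusions separately, after first establishing the key structural fact that $H(P_1,P_2)$ is closed under the operation $\lub$. For this closure, I would take $(s_1,s_2),(t_1,t_2)\in H(P_1,P_2)$ with witnessing functions $z,w$, so that $(z)_{\infty}=s_1P_1+s_2P_2$ and $(w)_{\infty}=t_1P_1+t_2P_2$; in particular $z$ and $w$ have poles only at $P_1,P_2$. If one of the two points dominates the other coordinatewise, the $\lub$ is that point and there is nothing to prove. Otherwise I may assume, after relabelling, that $s_1>t_1$ and $s_2<t_2$, and then consider $z+w$: at $P_1$ the pole orders $s_1,t_1$ are distinct, so $\nu_{P_1}(z+w)=-s_1$, at $P_2$ the orders $s_2,t_2$ are distinct, so $\nu_{P_2}(z+w)=-t_2$, and there are no other poles. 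Hence $(z+w)_{\infty}=s_1P_1+t_2P_2=\lub\bigl((s_1,s_2),(t_1,t_2)\bigr)$, which lies in $H(P_1,P_2)$. Reducing to the strictly crossed case is exactly what guarantees distinct pole orders and thus rules out any unexpected cancellation.

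For the inclusion $\supseteq$, I would observe that each of the three pieces of the generating set lies in $H(P_1,P_2)$: the set $\Gamma(P_1,P_2)$ by its very definition as a subset of $(G(P_1)\times G(P_2))\cap H(P_1,P_2)$, and $H(P_1)\times\{0\}$ and $\{0\}\times H(P_2)$ because a function with pole divisor $s_1P_1$ (resp.\ $s_2P_2$) witnesses $(s_1,0)\in H(P_1,P_2)$ (resp.\ $(0,s_2)\in H(P_1,P_2)$). Since $\lub(\x,\x)=\x$, the generating set is contained in the right-hand side, and by the closure just proved every $\lub$ of two of its elements again lies in $H(P_1,P_2)$; this gives $\supseteq$.

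For the inclusion $\subseteq$ I would first record a symmetry of $\Gamma(P_1,P_2)$. Setting $m_{\gamma}=\min\{\beta\in\N\mid(\beta,\gamma)\in H(P_1,P_2)\}$, the symmetric form of \cite[Lemma 2.6]{K1994} shows that $\gamma\mapsto m_{\gamma}$ is a bijection $G(P_2)\to G(P_1)$, so $\{(m_{\gamma_j},\gamma_j)\mid j=1,\dots,g\}$ is the graph of a permutation contained in $(G(P_1)\times G(P_2))\cap H(P_1,P_2)$; by Lemma \ref{lemma1} it therefore equals $\Gamma(P_1,P_2)$. Consequently every pair $(\beta_i,\gamma_{\sigma(i)})\in\Gamma(P_1,P_2)$ is minimal in each coordinate given the other, i.e.\ $\beta_i=m_{\gamma_{\sigma(i)}}$ and $\gamma_{\sigma(i)}=n_{\beta_i}$. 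Now given $(s_1,s_2)\in H(P_1,P_2)$ I would split into four cases according to membership in $H(P_1)$ and $H(P_2)$. If $s_1\in H(P_1)$ and $s_2\in H(P_2)$, then $(s_1,s_2)=\lub\bigl((s_1,0),(0,s_2)\bigr)$. If $s_1=\beta_i\in G(P_1)$ and $s_2\in H(P_2)$, then $s_2\ge n_{\beta_i}=\gamma_{\sigma(i)}$, so $(s_1,s_2)=\lub\bigl((\beta_i,\gamma_{\sigma(i)}),(0,s_2)\bigr)$, and the case $s_1\in H(P_1),\ s_2\in G(P_2)$ is symmetric via $m_{\gamma}$. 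Finally, if $s_1=\beta_i\in G(P_1)$ and $s_2=\gamma_j\in G(P_2)$, coordinatewise minimality gives $\gamma_{\sigma(i)}\le\gamma_j$ and $\beta_{\sigma^{-1}(j)}=m_{\gamma_j}\le\beta_i$, whence $(s_1,s_2)=\lub\bigl((\beta_i,\gamma_{\sigma(i)}),(\beta_{\sigma^{-1}(j)},\gamma_j)\bigr)$ with both arguments in $\Gamma(P_1,P_2)$.

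I expect the two delicate points to be the closure step, where one must verify that no pole cancellation occurs (handled by passing to the strictly crossed configuration), and the coordinatewise minimality/symmetry of $\Gamma(P_1,P_2)$ needed in the last two cases, which is precisely where Lemma \ref{lemma1} supplies the uniqueness that identifies the two descriptions of $\Gamma(P_1,P_2)$.
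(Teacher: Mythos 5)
Your proposal is correct. Note that the paper itself offers no proof of this lemma---it is imported wholesale as \cite[Lemma 2.2]{K1994}---so there is no internal argument to measure yours against; what you have written is a complete, self-contained proof along the standard lines. The two pillars of your argument are both sound: the closure of $H(P_1,P_2)$ under $\lub$ follows from the strict triangle inequality exactly as you say, since in the strictly crossed configuration the valuations of $z$ and $w$ at each of $P_1$, $P_2$ differ (this remains true when one of the exponents is $0$, because the corresponding valuation is then $\geq 0$ while the other is strictly negative), so no cancellation of pole orders can occur and $(z+w)_{\infty}$ is exactly the least upper bound; and the coordinatewise minimality of the elements of $\Gamma(P_1,P_2)$, which you need in the last two cases of the inclusion $\subseteq$, is correctly extracted from the symmetric description $\Gamma(P_1,P_2)=\{(m_{\gamma_j},\gamma_j)\mid j=1,\dots,g\}$, which Lemma \ref{lemma1} identifies with the original description $\{(\be_i,\gamma_{\sigma(i)})\mid i=1,\dots,g\}$. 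The only cosmetic remark is that in your paragraph on the inclusion $\supseteq$ the observation that the generating set is contained in the right-hand side is not needed for that inclusion; what does the work there is that the generating set lies in $H(P_1,P_2)$ combined with closure under $\lub$.
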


In this sense, the set $\Gamma(P_1, P_2)$ is called the {\it minimal generating set} of the Weierstrass semigroup $H(P_1, P_2)$. This set was computed in \cite{CT2016, G2004} for some places in families of function fields and used to provide codes with good parameters.

The next lemma will be an important tool in the computation of pure gaps (see Definition \ref{defpuregap}).

\begin{lemma}\cite[Noether's Reduction Lemma]{F1969}\label{fulton}
Let $D$ be a divisor, $P\in \cP_{F}$ and let $W$ be a canonical divisor. If $\ell(D)>0$ and $\ell(W-D-P)\neq \ell(W-D)$, then $\ell(D+P)=\ell(D)$. 
\end{lemma}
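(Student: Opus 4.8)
The plan is to obtain the claimed equality directly from the Riemann--Roch theorem, using as the only auxiliary input the elementary one-step bound $\ell(A)\le \ell(A+P)\le \ell(A)+1$, valid for any divisor $A$ and any place $P\in\cP_F$. Applying this bound with $A=W-D-P$, and using the inclusion $\cL(W-D-P)\subseteq\cL(W-D)$ coming from $W-D-P\le W-D$, one gets $\ell(W-D)-1\le \ell(W-D-P)\le \ell(W-D)$. The hypothesis $\ell(W-D-P)\neq \ell(W-D)$ then pins this down to the single possibility $\ell(W-D-P)=\ell(W-D)-1$.

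First I would write the Riemann--Roch theorem for the two divisors $D$ and $D+P$, namely $\ell(D)=\deg(D)+1-g+\ell(W-D)$ and $\ell(D+P)=\deg(D+P)+1-g+\ell(W-D-P)$. Subtracting these and using $\deg(D+P)=\deg(D)+1$ gives
$$\ell(D+P)-\ell(D)=1+\bigl(\ell(W-D-P)-\ell(W-D)\bigr).$$
Substituting the value forced above, the bracket equals $-1$, so $\ell(D+P)-\ell(D)=0$, which is the assertion. Note that the hypothesis $\ell(D)>0$ is not actually needed in this argument; it is retained because it reflects the setting in which the lemma is applied (and it is automatic in the residue-theoretic proof sketched below).

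There is essentially no serious obstacle along this route: the only point requiring care is the direction of the inequalities, that is, checking that the jump $\ell(W-D-P)-\ell(W-D)$ lies in $\{-1,0\}$ rather than $\{0,1\}$, which is exactly what the inclusion $\cL(W-D-P)\subseteq\cL(W-D)$ guarantees. An alternative, more classical proof avoids Riemann--Roch entirely: assuming for contradiction that $\ell(D+P)=\ell(D)+1$, one chooses $f\in\cL(D+P)\setminus\cL(D)$ and a differential $\omega\in\Omega(D)\setminus\Omega(D+P)$, the latter existing precisely because $\dim\Omega(D)=\ell(W-D)\neq\ell(W-D-P)=\dim\Omega(D+P)$. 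Adding the valuation conditions $(f)_F+D+P\ge 0$ and $(\omega)_F\ge D$ place by place shows that $f\omega$ is regular away from $P$ and has a pole of order exactly one at $P$; hence by the residue theorem $\res_P(f\omega)=0$, contradicting the fact that a differential with a simple pole has nonzero residue there. In this second approach the main step to get right is verifying that $f\omega$ has no poles other than at $P$, which is where the local valuation bookkeeping above is used.
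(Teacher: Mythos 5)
Your main argument is correct. Since the paper offers no proof of this lemma --- it is quoted verbatim from Fulton --- the relevant comparison is with the source: in Fulton's development the Reduction Lemma is established \emph{before} Riemann--Roch (it is an ingredient in his proof of that theorem), so his argument is necessarily independent of it and genuinely uses the hypothesis $\ell(D)>0$ to replace $D$ by an effective divisor. You instead read the lemma off as an immediate corollary of Riemann--Roch: from $\ell(D+P)-\ell(D)=1+\bigl(\ell(W-D-P)-\ell(W-D)\bigr)$ together with the containment $\cL(W-D-P)\subseteq\cL(W-D)$ and the one-step bound, the hypothesis $\ell(W-D-P)\neq\ell(W-D)$ forces the bracket to equal $-1$, and the conclusion follows. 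In the setting of this paper, where Riemann--Roch is available independently (via Stichtenoth), this is not circular and is in fact the shortest correct route; your observation that $\ell(D)>0$ is then superfluous is also accurate. Your alternative residue-theoretic sketch is the classical function-field proof and is likewise sound: the valuation bookkeeping gives $v_P(f\omega)=-1$ and $v_Q(f\omega)\geq 0$ for $Q\neq P$, so the residue theorem contradicts the nonvanishing of the residue at a simple pole. The only cosmetic blemish is the parenthetical claim that $\ell(D)>0$ is ``automatic'' in the residue argument --- it is simply unused there as well --- but this does not affect correctness.
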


\subsection{Algebraic Geometry Codes}

For a function field $F/\fq$ with full constant field $\fq$, we say that a place $P$ is rational if it has degree one. 
In \cite{S2009}, the Goppa's construction of linear codes over a function field $F/\fq$ of genus $g$ is described as follows. Let $P_1,\ldots, P_N$ be pairwise distinct rational places in $F$ and $D:=P_1+\cdots +P_N$.  Consider  other  divisor  $G$ of $F$ such that $\supp(D)\;\cap\; \supp(G)=\emptyset$. Associated to the divisors $D$ and $G$ we have the linear algebraic geometry code $C_{\cL}(D,G)$  and the differential algebraic geometry code $C_{\Omega}(D,G)$ defined as 
$$
C_{\mathcal{L}}(D,G)=\{(f(P_1),\dots,f(P_N))\mid f\in \cL(G)\}\subseteq \mathbb{F}_q^N\;
$$
and
$$
C_{\Omega}(D,G)=\{(\res_{P_1}(\omega),\dots, \res_{P_N}(\omega))\mid \omega\in \Omega(G-D)\}\subseteq \mathbb{F}_q^N.
$$
The parameters of these codes are: $N$ is the length of the code, $k$ its dimension over $\fq$, and $d$ its minimum (hamming) distance. We say that the code is an $[N, k, d]$-code (AG code).  These codes are dual to each other, that is, $C_{\cL}(D, G)^{\perp}=C_{\Omega}(D, G)$. In what follows we have the classical lower bounds for the minimum distance of the linear and differential codes.

\begin{proposition}\cite[Corollary 2.2.3 and Theorem 2.2.7]{S2009}\label{goppabound}
Given the AG codes $C_{\cL}(D, G)$ and $C_{\Omega}(D, G)$ with parameters $[N, k, d]$ and $[N, k_{\Omega}, d_{\Omega}]$ respectively, we have that if $2g-2<\deg(G)<N$, then
$$
k=\deg(G)+1-g\quad \text{and}\quad d\geq N-\deg(G),
$$
and 
$$k_{\Omega}=N-\deg(G)-1+g\quad \text{and}\quad d_{\Omega}\geq \deg(G)-(2g-2).
$$ 
\end{proposition}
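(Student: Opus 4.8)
The plan is to treat the linear code $C_{\cL}(D,G)$ directly via its defining evaluation map, and then to deduce the differential code parameters by duality. First I would consider the $\fq$-linear evaluation map $\mathrm{ev}\colon \cL(G)\to \fq^N$ given by $\mathrm{ev}(f)=(f(P_1),\dots,f(P_N))$, whose image is exactly $C_{\cL}(D,G)$. Its kernel consists of those $f\in \cL(G)$ vanishing at every $P_i$, which is precisely $\cL(G-D)$, so $k=\dim C_{\cL}(D,G)=\ell(G)-\ell(G-D)$. Since $\deg(G)<N=\deg(D)$ we have $\deg(G-D)<0$ and hence $\ell(G-D)=0$; moreover $\deg(G)>2g-2$ means $\deg(W-G)<0$ for a canonical divisor $W$, so the Riemann--Roch theorem collapses to $\ell(G)=\deg(G)+1-g$. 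Combining these gives $k=\deg(G)+1-g$.

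Next, for the minimum distance of $C_{\cL}(D,G)$, I would take a nonzero codeword of minimal weight $d$ coming from some $f\in \cL(G)\setminus\{0\}$ with exactly $d$ nonzero coordinates. Then $f$ vanishes at the remaining $N-d$ places, say $P_{i_1},\dots,P_{i_{N-d}}$, so $0\neq f\in \cL\bigl(G-\sum_{j=1}^{N-d}P_{i_j}\bigr)$. A nonzero Riemann--Roch space forces the defining divisor to have nonnegative degree, whence $\deg(G)-(N-d)\geq 0$, that is $d\geq N-\deg(G)$.

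For the differential code I would exploit the duality $C_{\Omega}(D,G)=C_{\cL}(D,G)^{\perp}$ already recorded in the excerpt, which immediately yields $k_{\Omega}=N-k=N-\deg(G)-1+g$. For the bound on $d_{\Omega}$ I would pass to the linear description of the dual: choosing a differential $\eta$ having a simple pole with residue $1$ at each $P_i$ and setting $W=(\eta)_F$, one obtains the identification $C_{\Omega}(D,G)=C_{\cL}(D,\,D+W-G)$. Applying the linear distance bound just proved to the divisor $D+W-G$, whose degree is $N+2g-2-\deg(G)<N$ precisely because $\deg(G)>2g-2$, gives $d_{\Omega}\geq N-\deg(D+W-G)=\deg(G)-(2g-2)$.

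The main obstacle will be the differential part: constructing the residue differential $\eta$ and verifying the identity $C_{\Omega}(D,G)=C_{\cL}(D,\,D+W-G)$ requires care with the residue theorem and the local expansions of differentials at the evaluation places, whereas the linear code assertions follow almost mechanically from the kernel computation and Riemann--Roch.
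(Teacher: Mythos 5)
Your proposal is correct, and the paper itself gives no proof here: Proposition \ref{goppabound} is quoted directly from Stichtenoth \cite[Corollary 2.2.3 and Theorem 2.2.7]{S2009}, and your argument (kernel of the evaluation map plus Riemann--Roch for $k$ and the weight bound, then duality and the identification $C_{\Omega}(D,G)=C_{\cL}(D,D+W-G)$ via a differential with simple poles and residue $1$ at each $P_i$) is essentially the proof given in that reference. The only ingredient you flag as delicate, the existence of such an $\eta$ with $\supp(D+W-G)\cap\supp(D)=\emptyset$, is indeed the standard lemma preceding the cited theorem, so nothing is missing.
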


The well-known Singleton bound on a linear $[N, k, d]$-code establishes that $k+d \leq N+1$. The Singleton defect of the code is defined by $\delta=N+1-k-d \geq 0$ and can be used to measures how good is the code, that is, the smaller is the Singleton defect better is the code.

Now, we present a result that can be used to improve the lower bound for the minimum distance of an AG code.

\begin{theorem}\cite[Theorem 3]{GKL1993}\label{garcia}
Suppose that $\g-t, \g-t+1, \dots, \g-1, \g$ is a sequence of $t+1$ consecutive gaps at a rational place $Q$. Let $G=\g Q$ and $D=P_1 +P_2+ \cdots +P_N$, where $P_i$ is a rational place  not in the support of $G$ for each $i=1, \dots, N$. If the code $C_\cL(D, G)$ has positive dimension, then its minimum distance $d$ satisfies $$ d \geq N-\deg(G)+t+1.$$
\end{theorem}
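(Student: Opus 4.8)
The plan is to argue by contradiction: a codeword of weight below the claimed bound will yield a function whose pole order at $Q$ is forced into the block of consecutive gaps $\g-t,\dots,\g$, which is impossible. Throughout, recall that $\deg(G)=\g$ since $Q$ is rational.

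First I would set up the dictionary between codewords and functions. As $C_\cL(D,G)$ has positive dimension, choose a nonzero codeword $c$ of minimum weight $d$ and write $c=(f(P_1),\dots,f(P_N))$ with $f\in\cL(G)$; since $c\neq 0$ we have $f\neq 0$. The weight of $c$ is $N$ minus the number of indices $i$ with $f(P_i)=0$, so $f$ vanishes at exactly $N-d$ of the places $P_i$. Let $E$ be the sum of those places; then $E\geq 0$, $\deg(E)=N-d$, and $\supp(E)\cap\{Q\}=\emptyset$ because each $P_i\neq Q$. The vanishing of $f$ along $E$ together with $f\in\cL(\g Q)$ gives $(f)\geq E-\g Q$, i.e.\ $f\in\cL(\g Q-E)$; in particular $f$ has poles only at $Q$, of order at most $\g$.

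Now suppose, for contradiction, that $d\leq N-\deg(G)+t=N-\g+t$, so that $\deg(E)=N-d\geq\g-t$. Set $a:=-\nu_Q(f)$. Since $\deg(E)\geq\g-t\geq 1$ (the least gap at $Q$ equals $1$), $f$ is nonconstant with its only pole at $Q$, whence $(f)_\infty=aQ$ with $a\geq 1$, and therefore $a\in H(Q)$. On one hand $a\leq\g$ because $f\in\cL(\g Q-E)$. On the other hand $(f)$ is principal, so $\deg(f)_0=\deg(f)_\infty=a$, and the containment $(f)_0\geq E$ gives $a=\deg(f)_0\geq\deg(E)\geq\g-t$. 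Hence $a\in\{\g-t,\g-t+1,\dots,\g\}$: a non-gap lying inside the prescribed block of $t+1$ consecutive gaps at $Q$, which is a contradiction. Therefore $d\geq N-\g+t+1=N-\deg(G)+t+1$, as claimed.

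The two points requiring care are confirming that $f$ has no poles off $Q$ — forced by $f\in\cL(\g Q-E)$ with $\supp(E)\cap\{Q\}=\emptyset$ — and establishing the lower bound $a\geq\g-t$, which rests on the degree identity $\deg(f)_0=\deg(f)_\infty$ for the principal divisor $(f)$ combined with $(f)_0\geq E$; the remainder is bookkeeping. I note that Noether's Reduction Lemma (Lemma \ref{fulton}) is not needed for this direct argument, although it affords an alternative route via comparing $\ell\big((\g-j)Q-E\big)$ as $j$ runs over the gap block.
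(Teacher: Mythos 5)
Your argument is correct. Note that the paper itself gives no proof of this statement—it is quoted verbatim from \cite[Theorem 3]{GKL1993}—so there is nothing internal to compare against; your proof is essentially the standard one from that reference: the pole number $a=-\nu_Q(f)$ of the function underlying a minimum-weight codeword is a non-gap squeezed into $\{\g-t,\dots,\g\}$ by the two inequalities $a\le\g$ (from $f\in\cL(\g Q)$) and $a=\deg (f)_0\ge \deg(E)$ (from the zeros forced by the low weight), contradicting that this block consists of gaps. All the delicate points (ruling out constant $f$, $\supp(E)\cap\{Q\}=\emptyset$, and $\deg(f)_0=\deg(f)_\infty$) are handled correctly, and you are right that Noether's Reduction Lemma is not needed here.
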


\section{Pure Gaps in Function Fields}
In this section, we characterize the set of  pure gaps at two rational places over an arbitrary function field $F/\fq$. The notion of pure gaps at two places in a function field was introduced by Homma and Kim in \cite{HK2001}.

\begin{definition}\label{defpuregap}
The pair of natural numbers $(s_1, s_2)$ is a pure gap at the places $P_1, P_2$ if it satisfies
$$
\ell\left(s_1P_1+s_2P_2\right)=\ell\left(s_1P_1+s_2P_2-P_j\right)\text{ for all }j\in \{1, 2\}\,.
$$
We denote the pure gap set at $P_1, P_2$ by $G_0(P_1, P_2)$. 
\end{definition}

Equivalent, by \cite[Lemma 2.3]{HK2001}, we have that the pair $(s_1,s_2)$ is a pure gap if 
$$
\ell\left(s_1P_1+s_2P_2\right)=\ell\left((s_1-1)P_1+(s_2-1)P_2\right).
$$

Homma and Kim used this notion to provide a lower bound for the minimum distance of two-point differential AG codes.  

\begin{theorem}\cite[Theorem 3.3]{HK2001}\label{TorresCarvalho}
Let $P_1,\dots, P_N, Q_1,Q_2$ be pairwise distinct $\mathbb{F}_q$-rational places on the function field $F/\fq$ of genus $g$. Let $(\alpha_1,\alpha_2),(\beta_1,\beta_2)$ in $\mathbb{N}^2$ be such that $\al_i\leq \be_i$ for $i=1,2$. Suppose each pair $(\gamma_1,\gamma_2)$ with $\al_i\leq \gamma_i\leq \be_i$ for $i=1,2$ is a pure gap at $Q_1, Q_2$. Consider the divisors  $D=P_1+\cdots+P_N$ and $G=\sum_{i=1}^2(\alpha_i+\beta_i-1)Q_i$. Then  the minimum distance $d$  of the code $C_\Omega(D,G)$ satisfies
$$d\geq \deg(G)-(2g-2)+\sum_{i=1}^{2}(\be_i-\al_i)+2.$$

\end{theorem}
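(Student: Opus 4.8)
The plan is to bound the minimum distance of the differential code directly, combining the divisorial description of nonzero codewords with the rectangle of pure gaps. Fix a canonical divisor $W$. A nonzero codeword of $C_\Omega(D,G)$ of weight $w$ corresponds to a Weil differential $\omega$ with $(\omega)\ge G-D$ whose residues vanish off a subdivisor $D'\le D$ with $\deg D'=w$; since $\supp(D')\cap\{Q_1,Q_2\}=\emptyset$, this forces $(\omega)\ge G-D'$ and $\omega\notin\Omega(G)$, whence
$$
d=\min\{\deg D'\mid 0\le D'\le D,\ \Omega(G-D')\supsetneq\Omega(G)\}.
$$
Taking $D'$ minimal, for each place $P\in\supp(D')$ minimality yields $\Omega(G-D'+P)=\Omega(G)$, so that $\Omega(G-D')\supsetneq\Omega(G-D'+P)$; this strict jump at $P$ is the hypothesis that will feed Lemma \ref{fulton}. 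The crude count $\deg D'\ge\deg G-(2g-2)$ already follows, reproducing the designed distance of Proposition \ref{goppabound}; the whole point is to upgrade it by $\sum_i(\be_i-\al_i)+2$.

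Second, I would translate the pure-gap rectangle into arithmetic of Riemann--Roch dimensions. Writing $\phi(\g_1,\g_2)=\ell(\g_1Q_1+\g_2Q_2)$, Definition \ref{defpuregap} says precisely that $\phi(\g_1,\g_2)=\phi(\g_1-1,\g_2)=\phi(\g_1,\g_2-1)$ for every $(\g_1,\g_2)$ in the box $[\al_1,\be_1]\times[\al_2,\be_2]$. Dually, setting $\psi(\g_1,\g_2)=\ell(W-\g_1Q_1-\g_2Q_2)$, Riemann--Roch converts each pure gap into the two strict increments $\psi(\g_1-1,\g_2)=\psi(\g_1,\g_2)+1$ and $\psi(\g_1,\g_2-1)=\psi(\g_1,\g_2)+1$. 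Chaining these along a monotone staircase running from $(\be_1,\be_2)$ down to $(\al_1,\al_2)$ produces $(\be_1-\al_1)+(\be_2-\al_2)$ successive strict increments of $\psi$, and the two independent coordinate directions available at the corner $(\al_1,\al_2)$ supply the remaining two; this is the combinatorial origin of the additive term $\sum_i(\be_i-\al_i)+2$.

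Third, I would assemble the estimate. Starting from $\omega\in\Omega(G-D')\setminus\Omega(G)$, I apply Noether's Reduction Lemma (Lemma \ref{fulton}) repeatedly, each application invoking a pure-gap increment in one coordinate to pass from $\g_jQ_j$ to $(\g_j-1)Q_j$ while keeping track of the change of dimension and of the non-base-point condition at $\supp(D')$. This builds a descending chain of divisors from $G$ into the rectangle whose length measures $\deg D'$ against $\deg G-(2g-2)$; comparing it with the $\sum_i(\be_i-\al_i)+2$ increments of $\psi$ from the previous step yields
$$
\deg D'\ \ge\ \deg(G)-(2g-2)+\sum_{i=1}^{2}(\be_i-\al_i)+2,
$$
which is the claim. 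The hard part is exactly this bookkeeping: one must verify that the increments forced by the rectangle are genuinely \emph{independent} of those already counted in the designed distance, rather than reproducing them, and one must handle the corners of the rectangle (where the two coordinate directions interact and the final $+2$ arises) together with the persistence of the non-base-point hypotheses through each reduction. This is the two-point, full-rectangle analogue of the single-point run-of-gaps mechanism underlying Theorem \ref{garcia}.
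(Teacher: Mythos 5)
The paper does not prove this statement: it is quoted verbatim from \cite[Theorem 3.3]{HK2001}, so there is no internal proof to compare against, and your proposal has to stand on its own. Its first two steps are correct and standard: the identity $d=\min\{\deg D'\mid 0\le D'\le D,\ \Omega(G-D')\supsetneq\Omega(G)\}$ is the usual divisorial description of minimum-weight codewords of $C_\Omega(D,G)$, and the Riemann--Roch translation of the pure-gap rectangle into $\sum_i(\be_i-\al_i)+2$ forced unit increments of $\psi(\g_1,\g_2)=\ell(W-\g_1Q_1-\g_2Q_2)$ (equivalently, $\ell(\be_1Q_1+\be_2Q_2)=\ell((\al_1-1)Q_1+(\al_2-1)Q_2)$) is exactly right.

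The third step, however, is not a proof but a declaration of intent, and the difficulty you defer to ``bookkeeping'' is the entire content of the theorem. Concretely: taking $W=(\omega)$ for a minimum-weight differential and writing $(\omega)=G-D'+E$ with $E\ge 0$ and $\nu_P(E)=0$ on $\supp(D')$, the degree of a canonical divisor gives $d=\deg D'=\deg(G)-(2g-2)+\deg E$ immediately, so the theorem is exactly the single inequality $\deg E\ge \sum_i(\be_i-\al_i)+2$. Your paragraph never derives this; the increments of $\psi$ live on divisors supported only at $Q_1,Q_2$, while $E$ also has a component away from $Q_1,Q_2$ and must be played against the exact pole orders $\nu_P(\omega)=-1$ along $\supp(D')$ --- this is the ``independence'' you flag and do not establish, and a naive comparison of the two counts does not yield it (one can write down the $\deg B$ increments of $\psi$ and still have no contradiction with $\deg E$ small unless the simple-pole condition on $D'$ is used in an essential way). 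Moreover, Lemma \ref{fulton} as stated runs in the opposite direction from what you need: it concludes $\ell(D+P)=\ell(D)$ from a hypothesis of the form $\ell(W-D-P)\ne\ell(W-D)$, which is why the paper invokes it to \emph{certify} that certain pairs are pure gaps (Claim 2 in the proof of Proposition \ref{puregaps}), not to exploit a rectangle of known pure gaps for a distance bound. As written, the proposal reproduces the two standard ingredients of the Homma--Kim argument but omits the step that combines them, so it does not constitute a proof.
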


In the following we show a way to characterize pure gaps at two rational places $P_1, P_2$ in $F$ from the minimal generating set $\Gamma(P_1,P_2)$. 
For this we need the following definition: given two pairs $\x=(\be_1, \gamma_1)$ and $\y=(\be_2, \gamma_2)$ in $\N^2$, the {\it greatest lower bound} of $\x$ and $\y$ is defined as $$\glb(\x, \y) =(\min\{\be_1, \be_2\}, \min\{\gamma_1, \gamma_2\}).$$ 
\begin{proposition}\label{Gammasettopuregaps}
Let $P_1$ and $P_2$ be two distinct rational places in the algebraic function field $F/\fq$. Then the set of pure gaps $G_0(P_1, P_2)$ at $P_1, P_2$ is given by
$$
G_0(P_1, P_2)=\{\glb(\x, \y)\mid \x, \y\in \Gamma(P_1, P_2)\} \setminus \Gamma(P_1, P_2).
$$
\end{proposition}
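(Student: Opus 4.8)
The plan is to translate the pure gap condition into a statement about the semigroup $H(P_1,P_2)$ and the minimal second coordinates $n_\be$, and then read off the set equality directly. Writing $D=s_1P_1+s_2P_2$ with $s_1,s_2\geq 1$, I would first observe that $\ell(D)>\ell(D-P_1)$ holds exactly when there is a function whose pole divisor is $s_1P_1+tP_2$ for some $0\leq t\leq s_2$, that is, exactly when $m_1(s_1):=\min\{t\in\N\mid (s_1,t)\in H(P_1,P_2)\}\leq s_2$; symmetrically for $P_2$ with $m_2(s_2):=\min\{t\in\N\mid (t,s_2)\in H(P_1,P_2)\}$. Hence $(s_1,s_2)$ is a pure gap if and only if $m_1(s_1)>s_2$ and $m_2(s_2)>s_1$. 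Since $m_1(s_1)=0$ whenever $s_1\in H(P_1)$ and $m_1(s_1)=n_{s_1}$ whenever $s_1\in G(P_1)$ (and likewise for $m_2$), these two inequalities can hold only when $s_1\in G(P_1)$ and $s_2\in G(P_2)$, in which case they read $n_{s_1}>s_2$ and $m_2(s_2)>s_1$. This reformulation is the conceptual core; everything else is bookkeeping with $\glb$ and the bijection $\be\mapsto n_\be$ from $G(P_1)$ onto $G(P_2)$.

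For the inclusion $G_0(P_1,P_2)\subseteq\{\glb(\x,\y)\mid \x,\y\in\Gamma(P_1,P_2)\}\setminus\Gamma(P_1,P_2)$, I would start from a pure gap $(s_1,s_2)$, so $s_1\in G(P_1)$, $s_2\in G(P_2)$, $n_{s_1}>s_2$ and $m_2(s_2)>s_1$. The point $(s_1,n_{s_1})$ lies in $\Gamma(P_1,P_2)$ by definition, and since $s_2\in G(P_2)$ the bijection furnishes a unique $\be\in G(P_1)$ with $n_\be=s_2$, so $(\be,s_2)\in\Gamma(P_1,P_2)$. Because $(\be,s_2)\in H(P_1,P_2)$ we have $m_2(s_2)\leq\be$, whence $\be>s_1$; combined with $n_{s_1}>s_2$ this gives $\glb\bigl((s_1,n_{s_1}),(\be,s_2)\bigr)=(s_1,s_2)$, so $(s_1,s_2)$ has the required form. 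Finally $(s_1,s_2)\notin\Gamma(P_1,P_2)$, since the only element of $\Gamma(P_1,P_2)$ with first coordinate $s_1$ is $(s_1,n_{s_1})$ and $n_{s_1}>s_2$.

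For the reverse inclusion I would take $(s_1,s_2)=\glb(\x,\y)$ with $\x=(\be_1,\g_1),\y=(\be_2,\g_2)\in\Gamma(P_1,P_2)$ and $(s_1,s_2)\notin\Gamma(P_1,P_2)$. After a harmless swap I may assume $\be_1\leq\be_2$, so $s_1=\be_1$; if also $\g_1\leq\g_2$ then $(s_1,s_2)=\x\in\Gamma(P_1,P_2)$, a contradiction, so $\g_2<\g_1$, $s_2=\g_2$, and then necessarily $\be_1<\be_2$. Since $\x\in\Gamma(P_1,P_2)$ we have $\g_1=n_{s_1}$, giving $n_{s_1}=\g_1>\g_2=s_2$, the first pure gap inequality. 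The remaining and harder point is $m_2(s_2)>s_1$, which I would prove by contradiction: if some $(\be,s_2)\in H(P_1,P_2)$ had $\be\leq s_1$, then by Lemma \ref{genset} I could write $(\be,s_2)=\lub(\uu,\vv)$ with $\uu,\vv$ in the generating set $\Gamma(P_1,P_2)\cup(H(P_1)\times\{0\})\cup(\{0\}\times H(P_2))$. One of $\uu,\vv$, say $\uu$, has second coordinate $s_2\geq 1$; since $s_2\in G(P_2)$, $\uu$ can lie neither in $H(P_1)\times\{0\}$ nor in $\{0\}\times H(P_2)$, so $\uu\in\Gamma(P_1,P_2)$ with second coordinate $s_2$. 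By uniqueness in the bijection $\be\mapsto n_\be$ this forces $\uu=(\be_2,s_2)$, whence $\be\geq\be_2>s_1\geq\be$, a contradiction. Thus $m_2(s_2)>s_1$, and by the reformulation $(s_1,s_2)$ is a pure gap.

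The main obstacle is precisely this last step: converting the combinatorial fact that $(s_1,s_2)$ arises as a $\glb$ of two corner points into the analytic inequality $m_2(s_2)>s_1$. The device that makes it work is Lemma \ref{genset}, which expresses every element of $H(P_1,P_2)$ as a least upper bound of generators, together with the fact that $\be\mapsto n_\be$ is a bijection, so a generator lying in $\Gamma(P_1,P_2)$ is pinned down by either coordinate. I would also verify the boundary behaviour used throughout the first paragraph, namely that gaps are positive (so that $D-P_j$ is meaningful and $m_i$ vanishes exactly on the respective one-point non-gaps), to be sure the semigroup reformulation holds with no edge-case exceptions.
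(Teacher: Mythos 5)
Your proof is correct, but it follows a genuinely different route from the paper's. The paper's proof is short because it invokes Theorem 2.1 of Homma--Kim \cite{HK2001} as a black box: writing $\Gamma(P_1,P_2)=\{(\be_i,\gamma_{\sigma(i)})\}$, that theorem gives $G_0(P_1,P_2)=\{(\be_i,\gamma_j)\mid i<\sigma^{-1}(j)\text{ and }j<\sigma(i)\}$ outright, after which both inclusions reduce to a few lines of index bookkeeping with the permutation $\sigma$. You instead re-derive the underlying criterion from the Riemann--Roch definition of a pure gap, showing $(s_1,s_2)\in G_0(P_1,P_2)$ iff $m_1(s_1)>s_2$ and $m_2(s_2)>s_1$ for the minimal-coordinate functions $m_1,m_2$, and you handle the one genuinely delicate step --- converting ``$(s_1,s_2)$ is a $\glb$ of two corner points'' into $m_2(s_2)>s_1$ --- by decomposing an alleged element $(\be,s_2)\in H(P_1,P_2)$ with $\be\le s_1$ via Lemma \ref{genset} and using that a generator in $\Gamma(P_1,P_2)$ is pinned down by either coordinate. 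That contradiction argument is sound, and it is exactly the content that the paper outsources to \cite{HK2001} (in effect you prove that the minimal first coordinate over $\gamma_j$ is $\be_{\sigma^{-1}(j)}$ without stating it in those terms). What your approach buys is self-containedness: it needs only Lemma \ref{genset} and the bijectivity of $\be\mapsto n_\be$, not the Homma--Kim characterization. What it costs is length, and it obliges you to check the boundary conventions (gaps are positive since $0\in H(P_i)$, so $D-P_j$ is a genuine divisor inequality and $m_i$ vanishes exactly on non-gaps), which you correctly flag.
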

\begin{proof}
Let $G(P_1)=\{\be_1<\be_2<\cdots<\be_g\}$, $G(P_2)=\{\gamma_1<\gamma_2<\cdots<\gamma_g\}$ be the set of gaps at $P_1$ and $P_2$ respectively.  Then it exists  $\sigma$ a permutation of $\{1, \dots, g\}$ such that  $$\Gamma (P_1, P_2)=\{(\be_i, \gamma_{\sigma (i)}) \mid i=1, \dots, g\}.$$ From \cite[Theorem 2.1]{HK2001}, the set of pure gaps can be  characterizes as
$$
G_0(P_1, P_2)=\{(\be_i, \gamma_j)\mid i<\sigma^{-1}(j)\text{ and } j<\sigma(i)\}.
$$
Let $(\be_i, \gamma_j)$ be an element of $G_0(P_1, P_2)$. Then $\be_i\in G(P_1)$, $\gamma_j\in G(P_2)$, and from definition of the minimal generating set $\Gamma (P_1, P_2)=\{(\be_i, \gamma_{\sigma (i)}) \mid i=1, \dots, g\}$ we have that $(\be_i, \gamma_{\sigma(i)})$ and $(\be_{\sigma^{-1}(j)}, \gamma_j)$ are elements of $\Gamma(P_1, P_2)$. Since $i<\sigma^{-1}(j)$ and $j<\sigma(i)$, it follows that $(\be_i, \gamma_j)=\glb((\be_i, \gamma_{\sigma(i)}), (\be_{\sigma^{-1}(j)}, \gamma_j))$ and $(\be_i, \gamma_j) \not\in \Gamma(P_1, P_2)$.

On the other hand, let $\x=(\be_{k}, \gamma_{\sigma(k)})$ and $\y=(\be_{l}, \gamma_{\sigma(l)})$ be elements in $\Gamma(P_1, P_2)$ such that $\glb(\x, \y)\not\in \Gamma(P_1, P_2)$. Without loss of generality, suppose that $k<l$. If $\sigma(k)\leq\sigma(l)$, then $\glb(\x, \y)=\x\in \Gamma(P_1, P_2)$, a contradiction. Therefore $\sigma(k)>\sigma(l)$ and $\glb(\x, \y)=(\be_k, \gamma_{\sigma(l)})$, where $k<\sigma^{-1}(\sigma(l))$ and $\sigma(l)<\sigma(k)$, that is, $\glb(\x, \y)\in G_0(P_1, P_2)$.
\end{proof}

\section{The Weierstrass semigroup at one and two rational places over a Kummer extension}

Consider the curve $\cX$ defined by the affine equation 
\begin{equation}\label{curveX}
\cX: \quad y^{m}:=f(x)=\prod_{i=1}^{r} (x-\a_i)^{\lambda_i}, \quad \la_i\in \N, \quad  1\leq \la_i< m \quad \text{and} \quad p \nmid m,
\end{equation}
where $\al_1, \dots, \al_r\in K$ are pairwise distinct elements, $\la_0:=\sum_{i=1}^{r}\la_i$ and $(m, \la_0)=1$. 
Let $\cF=K(\cX)$ be its function field. Then $\cF/K(x)$ is a Kummer extension with exactly one place at infinity. By \cite[Proposition 3.7.3]{S2009}, its genus $g(\cX)$ is given by
$$
g(\cX)=\frac{m(r-1)+1-\sum_{i=1}^{r}(m, \la_i)}{2}.
$$ 
For $i=1, \dots, r$, let $P_{i}$ and $P_{\infty}$ be the places in $\cP_{K(x)}$ corresponding to the zero of $x-\al_i$ and the pole of $x$ respectively. If $(m, \la_i)=1$ we denote by $Q_i$ the only place in $\cF$ over $P_i$ and by $Q_\infty$ the only place over $P_\infty$.

Suppose for a moment that all the multiplicities $\la:=\la_1=\la_2=\dots=\la_r$ are the same, that is, $y^m =f(x)^\la$ where $f(x)$ is a separable polynomial over $K$ and $(m, r\lambda)=1$. In this case, since $(m, \la)=1$, the function field of $\cX$ is isomorphic to $K(x, z)$ where $z^m=f(x)$ for $z=y^{r_1}f^{r_2}$ and $r_1, r_2$ integers satisfying  $r_1\la+r_2m=1$. So, without loss of generality, if the multiplicities of the roots of $f(x)$ are the same and co-prime with $m$, we let all of them equal to 1.

In this section, we compute the Weierstrass semigroup at any totally ramified place in the extension $\cF/K(x)$. Furthermore, for two totally ramified places in the extension $\cF/K(x)$, we determine the minimal generating set of the corresponding Weierstrass semigroup.

To describe the minimal generating set at two totally ramified places in the extension $\cF/K(x)$, we start by providing another description of the gap set $G(Q_{\infty})$ given in \cite[Proposition 4.1]{M2022}, and computing the gap set at a totally ramified place $Q_{\ell}\in \cP_{\cF}$, where $Q_{\ell}\neq Q_{\infty}$.
\begin{proposition}\label{prop1}
The gap set at the only place at infinity $Q_{\infty}$ of $\cF$ is given by 
$$
G(Q_{\infty})=\left\{mj-i\la_0\mid 1\leq i\leq m-1, \, \ceil*{\frac{i\la_0}{m}}\leq j \leq \sum_{\ell=1}^{r}\ceil*{\frac{i\la_\ell}{m}}-1\right\}. 
$$ 
\end{proposition}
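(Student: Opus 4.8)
The plan is to establish the inclusion $G(Q_\infty)\subseteq R$, where $R$ denotes the set on the right-hand side of the stated formula, by exhibiting enough explicit functions whose pole divisor is supported only at $Q_\infty$, and then to upgrade this to an equality by a cardinality count, using that $\#G(Q_\infty)=g(\cX)$ together with the genus formula from \cite[Proposition 3.7.3]{S2009}. First I would record the basic valuations in $\cF/K(x)$. Since $P_\infty$ is totally ramified, $\nu_{Q_\infty}(x)=-m$ and, from $y^{m}=f(x)$ with $\deg f=\la_0$, we get $\nu_{Q_\infty}(y)=-\la_0$; at a place $Q$ lying over the zero $P_\ell$ of $x-\al_\ell$ one has $e(Q\mid P_\ell)=m/(m,\la_\ell)$, $\nu_Q(x-\al_\ell)=m/(m,\la_\ell)$ and $\nu_Q(y)=\la_\ell/(m,\la_\ell)$, while $x-\al_j$ for $j\neq\ell$ and $y$ are units at every finite place not lying over some $\al_\ell$.

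For the construction, for each $i$ with $0\le i\le m-1$ I would consider
\[
\phi_i:=\frac{y^{\,i}}{\prod_{\ell=1}^{r}(x-\al_\ell)^{\floor*{i\la_\ell/m}}}.
\]
A direct valuation check at a place over each $\al_\ell$ gives $\nu_Q(\phi_i)=\tfrac{1}{(m,\la_\ell)}\big(i\la_\ell-m\floor*{i\la_\ell/m}\big)\ge 0$, so $\phi_i$ is regular at every finite place and its only pole is at $Q_\infty$, of order $i\la_0-m\sum_{\ell}\floor*{i\la_\ell/m}$; since $(m,\la_0)=1$ forces $m\nmid i\la_0$ for $1\le i\le m-1$, this pole order is strictly positive. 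Multiplying by $x^{a}$ with $a\ge 0$ adds $am$ to the pole order, so for each $i$ every integer congruent to $i\la_0\pmod m$ and at least $i\la_0-m\sum_\ell\floor*{i\la_\ell/m}$ lies in $H(Q_\infty)$. As $i$ runs over $0,\dots,m-1$ these residues are distinct, hence $G(Q_\infty)$ is contained in the finite bottom part of each class. Rewriting such an integer as $mj-i\la_0$ and using $m\nmid i\la_0$ to replace floors by ceilings identifies this bottom part exactly with $R$; this is the routine bookkeeping step, and it is here that one passes from the residue class of $y^{i}$ to the $m-i$ indexing implicit in the form $mj-i\la_0$.

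It then remains to show $R=G(Q_\infty)$, for which I would compare cardinalities. Using the standard identity $\sum_{i=1}^{m-1}\ceil*{i\la/m}=\tfrac{\la(m-1)}{2}+\tfrac{m-(m,\la)}{2}$ (equivalently $\sum_{i=0}^{m-1}\{i\la/m\}=\tfrac{m-(m,\la)}{2}$), applied to each $\la_\ell$ and to $\la_0$ (where $(m,\la_0)=1$), the cardinality $\sum_{i=1}^{m-1}\big(\sum_\ell\ceil*{i\la_\ell/m}-\ceil*{i\la_0/m}\big)$ of $R$ telescopes: the terms $\tfrac{\la_\ell(m-1)}{2}$ sum to $\tfrac{\la_0(m-1)}{2}$ and cancel, leaving $\sum_{\ell}\tfrac{m-(m,\la_\ell)}{2}-\tfrac{m-1}{2}=\tfrac{m(r-1)+1-\sum_\ell(m,\la_\ell)}{2}=g(\cX)$. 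Since $\#G(Q_\infty)=g(\cX)$ and $G(Q_\infty)\subseteq R$, the two sets must coincide.

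The main obstacle is not the construction but the logic of the converse inclusion. One could instead try to prove directly that every element of $R$ is a genuine gap, by writing an arbitrary function with poles only at $Q_\infty$ in the integral basis $1,y,\dots,y^{m-1}$ and arguing that no cancellation of poles occurs; however, at the finite places over $\al_\ell$ with $(m,\la_\ell)>1$ the valuations of the terms $g_i(x)y^{i}$ are \emph{not} distinct modulo the ramification index, so this no-cancellation argument is delicate. The cardinality count circumvents this entirely, which is why I would organize the proof around exhibiting the nongaps $\phi_i$ and matching the gap count to the genus.
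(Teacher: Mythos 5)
Your proof is correct, but it runs in the opposite direction from the paper's. The paper proves the inclusion $G\subseteq G(Q_{\infty})$ directly: for each candidate $mj-i\la_0$ it verifies the arithmetical gap criterion of \cite[Corollary 3.6]{ABQ2019} (namely $\sum_{\ell=1}^{r}\{t\la_\ell/m\}>\ceil*{(mj-i\la_0)/m}$ for the appropriate $t$), and then closes the argument with the same cardinality count $\#G=g(\cX)$ that you use. You instead prove $G(Q_{\infty})\subseteq R$ by exhibiting the non-gaps $-\nu_{Q_{\infty}}(\phi_i x^a)$ with $\phi_i=y^{i}\prod_{\ell}(x-\al_\ell)^{-\floor*{i\la_\ell/m}}$, which sweeps out everything at or above the threshold in each residue class modulo $m$; your reindexing $i\mapsto m-i$ correctly identifies the complementary ``bottom parts'' with $R$, and your evaluation of $\#R$ agrees with the paper's. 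What your route buys is self-containment: it avoids the external criterion from \cite{ABQ2019} entirely and uses only the principal divisors of $y$ and $x-\al_\ell$ (the same divisors the paper records in its equation (\ref{div}) and exploits in Propositions \ref{propmgs1} and \ref{propmgs2}), at the cost of certifying gaps only indirectly, via the complement and the count $\#G(Q_{\infty})=g$. The paper's route, by contrast, certifies each listed element as a gap outright, which is the form of the statement actually reused later (e.g.\ in identifying consecutive gap sequences), but it leans on the imported criterion. Both arguments share the same combinatorial core, the identity $\sum_{i=1}^{m-1}\ceil*{i\la/m}=\tfrac{\la(m-1)+m-(m,\la)}{2}$, and both require $\#G(Q_{\infty})=g(\cX)$ to conclude.
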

\begin{proof} Define the set
$$
G:=\left\{mj-i\la_0\mid 1\leq i\leq m-1, \, \ceil*{\frac{i\la_0}{m}}\leq j \leq \sum_{\ell=1}^{r}\ceil*{\frac{i\la_\ell}{m}}-1\right\}. 
$$ 
For $mj-i\la_0\in G$, let $t$ be the unique element in $\{0, \dots, m-1\}$ such that $mj-i\la_0 - t\la_0\equiv 0\mod{m}$. Since $(m, \la_0)=1$ we get  $-i \equiv t\mod{m}$, so  $\{\frac{t\la_\ell}{m}\}=\{\frac{-i\la_\ell}{m}\}$ for $1\leq \ell \leq r$. Then
$$
\sum_{\ell=1}^{r}\left\{\frac{t\la_\ell}{m}\right\}=\sum_{\ell=1}^{r}\left\{\frac{-i\la_\ell}{m}\right\}=\sum_{\ell=1}^{r}\left(-\frac{i\la_\ell}{m}-\floor*{-\frac{i\la_\ell}{m}}\right)=-\frac{i\la_0}{m}+\sum_{\ell=1}^{r}\ceil*{\frac{i\la_\ell}{m}}.
$$
From the definition of $G$ we have
$$
\sum_{\ell=1}^{r}\left\{\frac{t\la_\ell}{m}\right\}=-\frac{i\la_0}{m}+\sum_{\ell=1}^{r}\ceil*{\frac{i\la_\ell}{m}} > -\ceil*{\frac{i\la_0}{m}}+\sum_{\ell=1}^{r}\ceil*{\frac{i\la_\ell}{m}} \geq j-\floor*{\frac{i\la_0}{m}} = \ceil*{\frac{mj-i\la_0}{m}}.
$$
Applying \cite[Corollary 3.6]{ABQ2019}, we conclude that $mj-i\la_0\in G(Q_{\infty})$. This yields  $G\subseteq G(Q_{\infty})$. 

On the other hand, since $\#\{1 \leq s \leq m-1: m \text{ divides }s\la_{\ell}\}=(m, \la_\ell)-1$ for $1\leq \ell \leq r$, we have that
\begin{align*}
\#G&=\sum_{i=1}^{m-1}\left[\left(\sum_{\ell=1}^{r}\ceil*{\frac{i\la_\ell}{m}}\right)-\ceil*{\frac{i\la_0}{m}}\right]\\
&=\sum_{\ell=1}^{r}\sum_{i=1}^{m-1}\ceil*{\frac{i\la_\ell}{m}}-\sum_{i=1}^{m-1}\ceil*{\frac{i\la_0}{m}}\\
&=\sum_{\ell=1}^{r}\left(m-(m, \la_\ell)+\sum_{i=1}^{m-1}\floor*{\frac{i\la_\ell}{m}}\right)-\frac{(m-1)(\la_0+1)}{2}\\
&=\sum_{\ell=1}^{r}\frac{(m-1)(\la_\ell-1)-(m, \la_\ell)-1+2m}{2}-\frac{(m-1)(\la_0+1)}{2}\\
&=\frac{m(r-1)+1-\sum_{\ell=1}^{r}(m, \la_\ell)}{2}\\
&=g(\cX),
\end{align*}
and this concludes the desired result $G(Q_{\infty})=G$.
\end{proof}

In the next proposition we compute the gap set at $Q_{\ell}$, a totally ramified place in the extension $\cF/K(x)$ different from $Q_{\infty}$.
\begin{proposition}\label{prop2}
Suppose that $(m, \la_\ell)=1$ for some $1\leq \ell\leq r$ and let $1\leq \la \leq m-1$ be the inverse of $\la_{\ell}$ modulo $m$. Let $Q_{\ell}\in \cP_{\cF}$ be the unique extension of $P_{\ell}$. Then 
$$
G(Q_{\ell})=\left\{i+mj\mid 1\leq i\leq m-1, \, 0\leq j \leq \left(\sum_{k=1}^{r}\ceil*{\frac{i\la\la_k}{m}}\right)-\ceil*{\frac{i\la\la_0}{m}}-1\right\}. 
$$ 
\end{proposition}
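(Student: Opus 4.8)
The plan is to deduce this statement from Proposition \ref{prop1} by a Möbius change of the rational subfield that carries the place $Q_{\ell}$ to the (unique) place at infinity of an isomorphic Kummer extension written again in the normal form (\ref{curveX}). The point is that $Q_{\ell}$ lies over the zero of $x-\al_\ell$, so replacing $x$ by a coordinate having a pole there turns $Q_\ell$ into an infinite place, and the gap set at an infinite place is already known.

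First I would substitute $u:=1/(x-\al_\ell)$, so that $K(x)=K(u)$ and $P_\ell$ becomes the pole of $u$. Using $x-\al_k=\big((\al_\ell-\al_k)u+1\big)/u$ for $k\neq\ell$ and $x-\al_\ell=1/u$, the defining equation becomes $y^m=c\,u^{-\la_0}\prod_{k\neq\ell}(u-\be_k)^{\la_k}$, where $\be_k:=1/(\al_k-\al_\ell)$ and $c:=\prod_{k\neq\ell}(\al_\ell-\al_k)^{\la_k}$. Setting $a:=\ceil*{\la_0/m}$ and $\mu_0:=am-\la_0$, the condition $(m,\la_0)=1$ forces $\mu_0\in\{1,\dots,m-1\}$; choosing $c_0\in K$ with $c_0^m=c$ (possible since $K$ is algebraically closed) and putting $w:=yu^a/c_0$, I would rewrite the equation as
$$
w^m=u^{\mu_0}\prod_{k\neq\ell}(u-\be_k)^{\la_k},\qquad K(u,w)=\cF.
$$
This is again of the form (\ref{curveX}): it has $r$ pairwise distinct finite branch points $0,\be_1,\dots$ (the $\be_k$ are nonzero and distinct), its multiplicities $\mu_0,\la_k$ lie in $\{1,\dots,m-1\}$, and its total multiplicity $\tilde\la_0:=\mu_0+\sum_{k\neq\ell}\la_k$ satisfies $\tilde\la_0\equiv-\la_\ell\pmod m$, hence $(m,\tilde\la_0)=(m,\la_\ell)=1$. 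Thus the normalized extension has a single place at infinity $\tilde Q_\infty$, and under the isomorphism $Q_\ell=\tilde Q_\infty$, so $G(Q_\ell)=G(\tilde Q_\infty)$.

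Next I would apply Proposition \ref{prop1} to the normalized extension, obtaining
$$
G(\tilde Q_\infty)=\left\{\, mj'-i'\tilde\la_0 \ \middle|\ 1\le i'\le m-1,\ \ceil*{\tfrac{i'\tilde\la_0}{m}}\le j'\le \ceil*{\tfrac{i'\mu_0}{m}}+\sum_{k\neq\ell}\ceil*{\tfrac{i'\la_k}{m}}-1 \,\right\}.
$$
For fixed $i'$ the associated gaps form an arithmetic progression of step $m$ whose least term is $m\ceil*{i'\tilde\la_0/m}-i'\tilde\la_0=(-i'\tilde\la_0)\bmod m$. Reparametrizing by the residue $i:=(-i'\tilde\la_0)\bmod m$, and using $\tilde\la_0\equiv-\la_\ell$ together with the inverse $\la\equiv\la_\ell^{-1}\pmod m$, I would check that $i\mapsto i'$ is exactly the bijection $i'\equiv i\la\pmod m$ of $\{1,\dots,m-1\}$, that the least gap in class $i$ equals $i$ (so $j$ starts at $0$), and that the number of terms in class $i$ equals
$$
\ceil*{\tfrac{i'\mu_0}{m}}+\sum_{k\neq\ell}\ceil*{\tfrac{i'\la_k}{m}}-\ceil*{\tfrac{i'\tilde\la_0}{m}}=\sum_{k=1}^{r}\ceil*{\tfrac{i\la\la_k}{m}}-\ceil*{\tfrac{i\la\la_0}{m}}.
$$
Writing each ceiling as $\ceil*{b/m}=b/m+\{-b/m\}$ and substituting the residues $i'\mu_0\equiv-i\la\la_0$, $i'\la_k\equiv i\la\la_k$, $i'\tilde\la_0\equiv-i$, this equality reduces to the elementary relation $\{z\}+\{-z\}=1$ for the non-integers $z=i\la\la_0/m$ and $z=i\la\la_\ell/m$. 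Setting $s=i+mj$ then gives precisely the claimed description of $G(Q_\ell)$.

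The substitution and the verification that the normalized equation is of the form (\ref{curveX}) are routine. The delicate step, and the main obstacle, is the final reconciliation: the parametrization produced by Proposition \ref{prop1} is indexed by $i'$ and $j'$ and is centered on the pole order $mj'-i'\tilde\la_0$, whereas the target description is indexed by the residue $i$ and by $j$ and is centered on $i+mj$. Matching the two requires the change of index $i'\equiv i\la\pmod m$ — this is exactly where the inverse $\la$ of $\la_\ell$ enters — and a careful fractional-part computation to show that the two index ranges have the same length.
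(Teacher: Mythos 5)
Your proposal is correct, but it takes a genuinely different route from the paper. The paper proves the statement directly, by the same two-step template used for Proposition \ref{prop1}: for each candidate $i+mj$ it picks the unique $t\in\{0,\dots,m-1\}$ with $i+mj+t\la_\ell\equiv 0\Mod{m}$ (so $t\equiv -i\la$), verifies the inequality $\sum_{k=1}^{r}\{t\la_k/m\}>\ceil*{(i+mj)/m}$, invokes the arithmetic gap criterion of Abd\'on--Borges--Quoos at the totally ramified place $Q_\ell$ to get the inclusion $G\subseteq G(Q_\ell)$, and then counts $\#G=g(\cX)$ to force equality. You instead reduce the statement to Proposition \ref{prop1} itself by the M\"obius substitution $u=1/(x-\al_\ell)$, renormalizing to $w^m=u^{\mu_0}\prod_{k\neq\ell}(u-\be_k)^{\la_k}$ with $\mu_0=\ceil*{\la_0/m}m-\la_0$ and $\tilde\la_0\equiv-\la_\ell\Mod{m}$, and then translating the index via $i'\equiv i\la\Mod{m}$. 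I checked the delicate step: with $\ceil*{z}=z+\{-z\}$ and the congruences $i'\mu_0\equiv-i\la\la_0$, $i'\la_k\equiv i\la\la_k$, $i'\tilde\la_0\equiv-i$, the equality of the two counts reduces to $\{i\la\la_0/m\}+\{-i\la\la_0/m\}=1$ and $\{-i\la\la_\ell/m\}=1-i/m$, both valid since the relevant quantities are not multiples of $m$; the least gap in class $i$ is indeed $i$. What your approach buys is that it avoids repeating both the gap criterion and the genus count (which the paper only sketches by reference to the proof of Proposition \ref{prop1}), and it makes structurally transparent why the modular inverse $\la$ of $\la_\ell$ must appear; what it costs is the change-of-variables bookkeeping and the reindexing computation, which is roughly as long as the paper's direct argument.
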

\begin{proof} 
We start by defining the set
$$
G:=\left\{i+mj\mid 1\leq i\leq m-1, \, 0\leq j \leq \left(\sum_{k=1}^{r}\ceil*{\frac{i\la\la_k}{m}}\right)-\ceil*{\frac{i\la\la_0}{m}}-1\right\}. 
$$ 
For $i+mj\in G$, let $t$ be the unique element in $\{0, \dots, m-1\}$ such that $i+mj + t\la_{\ell}\equiv 0\mod{m}$. So $-i\la \equiv t \mod{m}$ and we get  $\{\frac{t\la_k}{m}\}=\{\frac{-i \la\la_k}{m}\}$ for $1\leq k \leq r$.  Now we have
$$
\sum_{k=1}^{r}\left\{\frac{t\la_k}{m}\right\}=\sum_{k=1}^{r}\left\{\frac{-i\la\la_k}{m}\right\}=\sum_{k=1}^{r}\left(-\frac{i\la\la_k}{m}-\floor*{\frac{-i\la\la_k}{m}}\right)=-\frac{i\la\la_0}{m}+\sum_{k=1}^{r}\ceil*{\frac{i\la\la_k}{m}}.
$$
Since $(m, \la)=(m, \la_0)=1$, it follows that
$$
\sum_{k=1}^{r}\left\{\frac{t\la_k}{m}\right\}=-\frac{i\la\la_0}{m}+\sum_{k=1}^{r}\ceil*{\frac{i\la\la_k}{m}} > -\ceil*{\frac{i\la\la_0}{m}}+\sum_{k=1}^{r}\ceil*{\frac{i\la\la_k}{m}} \geq j+1 = \ceil*{\frac{i+mj}{m}}.
$$
Thus, from \cite[Corollary 3.6]{ABQ2019}, it follows that $G\subseteq G(Q_{\ell})$. Similarly to the proof of  Proposition \ref{prop1}, it can be proved that $\# G=g(\cX)$ and therefore $G(Q_{\ell})=G$.
\end{proof}

Furthermore, for the case $\la_1=\la_2=\dots=\la_r=1$, we  give another description of the gap set $G(Q)$ at a totally ramified place $Q\neq Q_{\infty}$ in $\cF/K(x)$. With this new characterization it will be easier to identify all consecutive sequences of gaps.
\begin{proposition}\label{prop3} 
Suppose that $\la_1=\la_2=\dots=\la_r=1$ and let $Q$ be a totally ramified place in $\cF/K(x), Q\ne Q_{\infty}$. Then
$$
G(Q)=\left\{mj-i\mid 1\leq j \leq r-1, \, \floor*{\frac{jm}{r}}+1\leq i \leq m-1\right\}.
$$
\end{proposition}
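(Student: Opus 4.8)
The plan is to derive this description directly from Proposition \ref{prop2}, of which it is the specialization to $\la_1 = \cdots = \la_r = 1$, followed by a change of indices.

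First I would note that since each $\la_\ell = 1$ we have $(m, \la_\ell) = 1$ for all $\ell$, so every $P_\ell$ is totally ramified and admits a unique extension $Q_\ell$ in $\cF$; hence $Q = Q_\ell$ for some $\ell$. Plugging $\la_\ell = 1$ into Proposition \ref{prop2}, its inverse modulo $m$ is $\la = 1$, and $\la_0 = r$. Because $\ceil*{\frac{i}{m}} = 1$ for every $1 \le i \le m-1$, the inner sum collapses to $\sum_{k=1}^r \ceil*{\frac{i}{m}} = r$, while $\ceil*{\frac{i \la \la_0}{m}} = \ceil*{\frac{ir}{m}}$. This yields the intermediate description
$$
G(Q) = \left\{ i + mj \mid 1 \le i \le m-1,\ 0 \le j \le r - 1 - \ceil*{\tfrac{ir}{m}} \right\}.
$$

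Next I would pass to the stated form through the change of variables $i' = m - i \in \{1, \dots, m-1\}$ and $j' = j + 1 \ge 1$, so that $i + mj = m j' - i'$. The crux is then to check that the two index regions coincide, i.e. that for all $1 \le i \le m-1$ and $j \ge 0$ one has
$$
\ceil*{\frac{ir}{m}} \le r - 1 - j \iff \floor*{\frac{(j+1)m}{r}} \le m - 1 - i.
$$
Since the right-hand sides are integers, the first inequality is equivalent to $\frac{ir}{m} \le r - 1 - j$ and the second to $\frac{(j+1)m}{r} < m - i$; clearing denominators reduces them respectively to $m(j+1) \le r(m-i)$ and $m(j+1) < r(m-i)$. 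These coincide unless $m(j+1) = r(m-i)$, and this is exactly where the hypothesis $(m, r) = (m, \la_0) = 1$ enters: such an equality would force $m \mid (m-i)$, hence $m \mid i$, impossible for $1 \le i \le m-1$. Thus the two inequalities are equivalent and the index regions match. This strict-versus-nonstrict comparison, resolved by the coprimality of $m$ and $r$, is the only genuinely delicate point of the argument.

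Finally, the boundary bookkeeping is routine: the range $1 \le i \le m-1$ corresponds to $1 \le i' \le m-1$, the condition $j \ge 0$ to $j' \ge 1$, and since $\ceil*{\frac{ir}{m}} \ge 1$ the bound $j \le r-1-\ceil*{\frac{ir}{m}}$ already forces $j' = j+1 \le r-1$. Translating $\floor*{\frac{(j+1)m}{r}} \le m - 1 - i$ back through $i' = m-i$, $j' = j+1$ gives $\floor*{\frac{j'm}{r}} + 1 \le i'$, so the region becomes $\{(i', j') : 1 \le j' \le r-1,\ \floor*{\frac{j'm}{r}} + 1 \le i' \le m-1\}$, which is precisely the claimed set.
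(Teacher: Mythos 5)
Your proof is correct, but it follows a genuinely different route from the paper's. The paper proves Proposition \ref{prop3} directly and independently of Proposition \ref{prop2}: it defines the candidate set $G$, computes $\#G=\frac{(m-1)(r-1)}{2}=g(\cX)$, and then verifies $G\subseteq G(Q)$ by applying the numerical gap criterion of \cite[Corollary 3.6]{ABQ2019} (for $mj-i$ one has $t=i$ and $\sum_{\ell}\{t\la_\ell/m\}=ir/m>j$), so equality follows from the cardinality count. You instead treat the statement as a pure reindexing of Proposition \ref{prop2}: specializing $\la=\la_k=1$, $\la_0=r$ gives $G(Q)=\{i+mj\mid 1\le i\le m-1,\ 0\le j\le r-1-\ceil*{ir/m}\}$, and the substitution $i'=m-i$, $j'=j+1$ transforms this into the claimed set, with the only delicate point being the equivalence $m(j+1)\le r(m-i)\iff m(j+1)<r(m-i)$, which you correctly resolve using $(m,r)=(m,\la_0)=1$. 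Your verification of that equivalence and of the boundary bookkeeping is accurate. What each approach buys: the paper's argument is self-contained and structurally parallel to the proofs of Propositions \ref{prop1} and \ref{prop2}, at the cost of repeating the ABQ-criterion computation and the genus count; yours avoids both by leaning on the already-established Proposition \ref{prop2} and makes explicit that Proposition \ref{prop3} is nothing more than a change of coordinates on a special case of it, which is arguably more illuminating about why the two descriptions agree.
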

\begin{proof}
First of all, define the set 
$$
G:=\left\{mj-i\mid 1\leq j \leq r-1, \, \floor*{\frac{jm}{r}}+1\leq i \leq m-1\right\}\,,
$$
and note that
$$
\# G=\sum_{j=1}^{r-1}\left(m-1-\floor*{\frac{jm}{r}}\right)=(m-1)(r-1)-\sum_{j=1}^{r-1}\floor*{\frac{jm}{r}}=\frac{(m-1)(r-1)}{2}=g(\cX).
$$ 
On the other hand, for each $mj-i\in G$, let $t$ be the unique element in $\{0, \dots, m-1\}$ such that $mj-i+t\equiv 0 \mod{m}$, then $t=i$. Moreover, since $mj/r< \floor*{mj/r}+1\leq i$, we have $j<ir/m$ and
$$
\sum_{\ell=1}^{r}\left\{\frac{t\la_\ell}{m}\right\}=\frac{ir}{m}>j=\ceil*{\frac{mj-i}{m}}.
$$
From \cite[Corollary 3.6]{ABQ2019}, we obtain that $G\subseteq G(Q)$ and therefore $G=G(Q)$.
\end{proof}

Now we describe the minimal generating set for the Weierstrass semigroup at two totally ramified places in $\cF$ with the same multiplicity.

\begin{proposition}\label{propmgs1} 
Suppose that $\la_{\ell_1}=\la_{\ell_2}$ and $(m, \la_{\ell_1})=1$ for some $1\leq \ell_1, \ell_2\leq r$. Let $\la$ be the inverse of $\la_{\ell_1}$ modulo $m$, and $Q_{\ell_s}\in \cP_{\cF}$ be the unique extension of $P_{\ell_s}$ for $s=1, 2$. Then
\begin{align*}
\Gamma(Q_{\ell_1}, Q_{\ell_2})=\Bigg\{(i+mj_1, i+mj_2)\in \N^2 &\mid 1\leq i \leq m-1, \, j_1\geq 0,\, j_2\geq 0,\\
&\quad j_1+j_2=\left(\sum_{k=1}^{r}\ceil*{\frac{i\la\la_k}{m}}\right)-\ceil*{\frac{i\la\la_0}{m}}-1 \Bigg\}.
\end{align*}
\end{proposition}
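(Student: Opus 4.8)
The plan is to verify the three hypotheses of Homma's criterion (Lemma \ref{lemma1}): I will show that the set $\Gamma$ in the statement is contained in $(G(Q_{\ell_1})\times G(Q_{\ell_2}))\cap H(Q_{\ell_1},Q_{\ell_2})$ and is the graph of a permutation of $\{1,\dots,g\}$; the lemma then forces $\Gamma=\Gamma(Q_{\ell_1},Q_{\ell_2})$. Throughout write $N_i:=\left(\sum_{k=1}^{r}\ceil*{\frac{i\la\la_k}{m}}\right)-\ceil*{\frac{i\la\la_0}{m}}-1$ for $1\le i\le m-1$. Since $\la_{\ell_1}=\la_{\ell_2}$ both places share the same inverse $\la$ modulo $m$, so Proposition \ref{prop2} gives literally the same gap set $G(Q_{\ell_1})=G(Q_{\ell_2})=\{i+mj\mid 1\le i\le m-1,\ 0\le j\le N_i\}$, and $\Gamma$ is exactly the displayed set.

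The two bookkeeping hypotheses are quick. For any triple $(i,j_1,j_2)$ defining an element of $\Gamma$ one has $0\le j_1\le N_i$ and $0\le j_2\le N_i$ (from $j_1,j_2\ge 0$ and $j_1+j_2=N_i$), so each coordinate $i+mj_s$ lies in $G(Q_{\ell_s})$; this gives $\Gamma\subseteq G(Q_{\ell_1})\times G(Q_{\ell_2})$. For the permutation structure, the projection onto the first coordinate is a bijection $\Gamma\to G(Q_{\ell_1})$: from $i+mj_1$ with $1\le i\le m-1$ the pair $(i,j_1)$ is determined and then $j_2=N_i-j_1$ is forced; symmetrically the second projection is a bijection onto $G(Q_{\ell_2})$. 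Enumerating both gap sets in increasing order realizes $\Gamma$ as $\{(\al_i,\be_{\tau(i)})\mid i=1,\dots,g\}$ for a permutation $\tau$.

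The real work, and the step I expect to be the main obstacle, is showing each element of $\Gamma$ lies in $H(Q_{\ell_1},Q_{\ell_2})$, i.e. producing a function with pole divisor exactly $(i+mj_1)Q_{\ell_1}+(i+mj_2)Q_{\ell_2}$. Imitating the functions that generate $H(Q_\infty)$, I would pick $t\in\{0,\dots,m-1\}$ with $t\equiv -i\la\pmod{m}$ (so $t\la_{\ell_s}\equiv -i\pmod m$) and set
\[
\psi=\frac{y^{t}}{(x-\a_{\ell_1})^{\ceil*{t\la_{\ell_1}/m}+j_1}\,(x-\a_{\ell_2})^{\ceil*{t\la_{\ell_2}/m}+j_2}\,\prod_{k\ne \ell_1,\ell_2}(x-\a_k)^{\floor*{t\la_k/m}}}.
\]
Using $\nu_{Q_{\ell_s}}(y)=\la_{\ell_s}$, $\nu_{Q_{\ell_s}}(x-\a_{\ell_s})=m$, $\nu_{Q_\infty}(y)=-\la_0$ and $\nu_{Q_\infty}(x-\a_k)=-m$, a direct valuation count gives $\nu_{Q_{\ell_1}}(\psi)=-(i+mj_1)$ and $\nu_{Q_{\ell_2}}(\psi)=-(i+mj_2)$; at every place over a finite $P_k$ with $k\ne\ell_1,\ell_2$ the exponent $\floor*{t\la_k/m}$ is chosen precisely so that $\nu(\psi)=(t\la_k\bmod m)/(m,\la_k)\ge 0$, and at all remaining finite places $\psi$ is a unit.

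The delicate point is the place at infinity, where the same count yields $\nu_{Q_\infty}(\psi)=m\big(N_i+2-\sum_{k}\{t\la_k/m\}\big)$, so I must check $\sum_k\{t\la_k/m\}<N_i+2$. Here I would use $\{t\la_k/m\}=\{-i\la\la_k/m\}=-\frac{i\la\la_k}{m}+\ceil*{\frac{i\la\la_k}{m}}$ to obtain $\sum_k\{t\la_k/m\}=N_i+1+\big(\ceil*{\frac{i\la\la_0}{m}}-\frac{i\la\la_0}{m}\big)$, and since $(m,\la)=(m,\la_0)=1$ with $1\le i\le m-1$ the last parenthesis is a fractional part lying in the open interval $(0,1)$. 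Hence $0<\nu_{Q_\infty}(\psi)<m$, so $\psi$ has no pole at $Q_\infty$ and its pole divisor is exactly $(i+mj_1)Q_{\ell_1}+(i+mj_2)Q_{\ell_2}$, proving membership in $H(Q_{\ell_1},Q_{\ell_2})$. With all three hypotheses of Lemma \ref{lemma1} verified, I conclude $\Gamma=\Gamma(Q_{\ell_1},Q_{\ell_2})$.
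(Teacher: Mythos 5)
Your proposal is correct and follows essentially the same route as the paper: both verify the hypotheses of Lemma \ref{lemma1} by combining Proposition \ref{prop2} for the containment in $G(Q_{\ell_1})\times G(Q_{\ell_2})$, the graph-of-a-bijection observation, and an explicit function certifying membership in $H(Q_{\ell_1},Q_{\ell_2})$. The only difference is cosmetic: the paper's certifying function is $(x-\a_{\ell_1})^{-(j_1+\be i)}(x-\a_{\ell_2})^{-(j_2+\be i)}y^{-i\la}\prod_{k\ne\ell_1,\ell_2}(x-\a_k)^{\ceil*{i\la\la_k/m}}$ with $\be$ a Bezout coefficient, whereas yours uses $y^{t}$ with $t\equiv -i\la\pmod m$; both have the same pole divisor and your valuation computations check out.
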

\begin{proof}
Without loss of generality, suppose that $\ell_1=1$ and $\ell_2=2$. Define the set
\begin{align*}
\Gamma:=\Bigg\{(i+mj_1, i+mj_2)\in \N^2 &\mid 1\leq i \leq m-1, \, j_1\geq 0,\, j_2\geq 0,\\
&\quad j_1+j_2=\left(\sum_{k=1}^{r}\ceil*{\frac{i\la\la_k}{m}}\right)-\ceil*{\frac{i\la\la_0}{m}}-1 \Bigg\}.
\end{align*}
We are going to prove that $\Gamma=\Gamma(Q_1, Q_2)$. For $k=1, \dots, r$, from \cite[Proposition 3.7.3]{S2009}, we have the principal divisors
\begin{equation}\label{div}
\begin{array}{l}
(x-\al_k)_{\cF}=\dfrac{m}{(m, \la_k)}\displaystyle\sum_{{ Q\in \cP_{\cF}, \, Q|P_{k}}}Q-mQ_{\infty}\quad \text{and}\\
(y)_{\cF}=\displaystyle\sum_{k=1}^{r}\frac{\la_k}{(m, \la_k)}\sum_{{ Q\in \cP_{\cF}, \, Q|P_{k}}}Q-\la_0Q_{\infty}.
\end{array} 
\end{equation}
On the other hand, since $(m, \la_1)=1$, then there exists $\be \in \Z$ such that $\be m + \la \la_1 =1$. Given the tuple  $(i+mj_1, i+mj_2)$ in $ \Gamma$, after some computations, we have the following divisor
\begin{align*}
&\left((x-\al_1)^{-(j_1+\be i)} (x-\al_2)^{-(j_2+\be i)} y^{-i\la}\prod_{k=3}^{r}(x-\al_k)^{\ceil*{\frac{i\la\la_k}{m}}} \right)_{\cF}\\
&=\sum_{k=3}^{r}\frac{m\ceil*{\frac{i\la\la_k}{m}}-i\la\la_k}{(m, \la_k)}\sum_{ Q\in \cP_{\cF}, \, Q|P_{k}}Q
+\left(i\la\la_0-m\floor*{\frac{i\la\la_0}{m}}\right) Q_{\infty}\\
&-(i+mj_1)Q_{1}-(i+mj_2)Q_{2},
\end{align*}
\noindent
where $m\ceil*{i\la\la_k/m}-i\la\la_k$, $i\la\la_0-m\floor*{i\la\la_0/m}$, $i+mj_1$, and $i+mj_2$ are non-negative integers. This proves that $\Gamma\subseteq H(Q_1, Q_2)$. 

On the other hand, since $j_1, j_2\geq 0$ and $j_1+j_2=\left(\sum_{k=1}^{r}\ceil*{\frac{i\la\la_k}{m}}\right)-\ceil*{\frac{i\la\la_0}{m}}-1$, we have
$$
0\leq j_s\leq \left(\sum_{k=1}^{r}\ceil*{\frac{i\la\la_k}{m}}\right)-\ceil*{\frac{i\la\la_0}{m}}-1\quad \text{for } s=1, 2.$$
From Proposition \ref{prop2}, we conclude that $\Gamma\subseteq G(Q_1)\times G(Q_2)$. Therefore $\Gamma\subseteq (G(Q_1)\times G(Q_2))\cap H(Q_1, Q_2)$. 

Moreover, again from Proposition \ref{prop2}, the set $\Gamma$ can be seen as the graph of the bijective map $\theta: G(Q_1)\rightarrow G(Q_2)$ given by $\theta(i+mj_1)=i+mj_2$, which defines a permutation $\tau$ of the set $\{1, \dots, g(\cX)\}$. From Lemma \ref{lemma1}  we conclude that $\Gamma=\Gamma(Q_1, Q_2)$.
\end{proof}

In particular, if $\la_1= \la_2= \dots =\la_r$ in the Proposition \ref{propmgs1}, we obtain the description of the minimal generating set  given in \cite[Theorem 8]{YH2017}.

\begin{proposition}\label{propmgs2} 
Suppose that $\la_{\ell}=1$ for some $1\leq \ell\leq r$ and let $Q_{\ell}\in \cP_{\cF}$ be the unique extension of $P_{\ell}$. Then
\begin{align*}
\Gamma(Q_{\infty}, Q_{\ell})=\Bigg\{(mj_1-i\la_0, i+mj_2)\in \N^2 &\mid 1\leq i \leq m-1, \, j_1\geq \ceil*{\frac{i\la_0}{m}}, \, j_2\geq 0,\\
&\quad j_1+j_2=\sum_{k=1}^{r}\ceil*{\frac{i\la_k}{m}} -1 \Bigg\}.
\end{align*}
\end{proposition}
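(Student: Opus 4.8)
The plan is to mirror the argument used for Proposition \ref{propmgs1}, replacing one ramified place by $Q_\infty$ and exploiting that $\la_\ell = 1$ forces the relevant modular inverse to equal $1$, which removes the integrality correction (the integer $\be$ with $\be m + \la\la_1 = 1$) needed there, since now $\be = 0$. Without loss of generality I take $\ell = 1$, so $Q_\ell = Q_1$ and $(m, \la_1) = 1$. Write $\Gamma$ for the candidate set in the statement. The goal is to verify the hypotheses of Lemma \ref{lemma1}: that $\Gamma \subseteq (G(Q_\infty) \times G(Q_1)) \cap H(Q_\infty, Q_1)$ and that $\Gamma$ is the graph of a permutation of $\{1, \dots, g(\cX)\}$.

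First I would show $\Gamma \subseteq H(Q_\infty, Q_1)$ by exhibiting, for each tuple $(mj_1 - i\la_0,\, i + mj_2) \in \Gamma$, an explicit function with exactly the prescribed pole divisor. Using the principal divisors \eqref{div}, the natural candidate is
$$
z = (x - \al_1)^{-j_2} \, y^{-i} \prod_{k=2}^{r} (x - \al_k)^{\ceil*{\frac{i\la_k}{m}}}.
$$
A direct valuation count gives $\nu_{Q_1}(z) = -mj_2 - i$ and $\nu_{Q_\infty}(z) = i\la_0 + mj_2 - m\sum_{k=2}^{r} \ceil*{\frac{i\la_k}{m}}$; since $\ceil*{\frac{i\la_1}{m}} = 1$ for $1 \leq i \leq m-1$, the constraint $j_1 + j_2 = \sum_{k=1}^{r} \ceil*{\frac{i\la_k}{m}} - 1$ collapses $\sum_{k=2}^{r}\ceil*{\frac{i\la_k}{m}}$ to $j_1 + j_2$ and reduces $\nu_{Q_\infty}(z)$ to $-(mj_1 - i\la_0)$. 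At every place $Q$ over $P_k$ with $k \geq 2$ one gets $\nu_Q(z) = \bigl(m\ceil*{\frac{i\la_k}{m}} - i\la_k\bigr)/(m, \la_k) \geq 0$, and $z$ is regular elsewhere, so $(z)_\infty = (mj_1 - i\la_0)Q_\infty + (i + mj_2)Q_1$, as required.

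Next I would check the membership $\Gamma \subseteq G(Q_\infty) \times G(Q_1)$. The conditions $j_1 \geq \ceil*{\frac{i\la_0}{m}}$ and $j_2 \geq 0$ (equivalently $j_1 \leq \sum_{k}\ceil*{\frac{i\la_k}{m}} - 1$) place the first coordinate exactly in the range describing $G(Q_\infty)$ in Proposition \ref{prop1}; symmetrically, $0 \leq j_2 \leq \sum_{k}\ceil*{\frac{i\la_k}{m}} - \ceil*{\frac{i\la_0}{m}} - 1$ places the second coordinate in the range describing $G(Q_1)$ in Proposition \ref{prop2} (with inverse $\la = 1$). Finally, since $(m, \la_0) = 1$, each value $mj_1 - i\la_0$ determines $i$ modulo $m$, hence $i$ and $j_1$ uniquely, and likewise $i + mj_2$ determines $i$ and $j_2$; thus for each fixed $i$ the order-reversing map $j_1 \mapsto j_2 = \sum_{k}\ceil*{\frac{i\la_k}{m}} - 1 - j_1$ is a bijection between the gaps of $G(Q_\infty)$ and those of $G(Q_1)$ sharing that residue $i$. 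Ranging over $i = 1, \dots, m-1$ exhibits $\Gamma$ as the graph of a bijection $G(Q_\infty) \to G(Q_1)$, i.e. of a permutation of $\{1, \dots, g(\cX)\}$, so Lemma \ref{lemma1} yields $\Gamma = \Gamma(Q_\infty, Q_1)$.

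The main obstacle is the explicit valuation bookkeeping in the first step, and in particular verifying the collapse $\ceil*{\frac{i\la_1}{m}} = 1$ that makes the exponent of $(x - \al_1)$ come out to the clean value $-j_2$ with no further correction; once the function $z$ is correctly identified, the remaining steps are routine and closely parallel the proof of Proposition \ref{propmgs1}.
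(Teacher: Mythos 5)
Your proposal is correct and follows essentially the same route as the paper's proof: the same test function $(x-\al_1)^{-j_2}y^{-i}\prod_{k=2}^{r}(x-\al_k)^{\ceil*{i\la_k/m}}$ to establish membership in $H(Q_\infty,Q_1)$, the same reduction to the gap descriptions of Propositions \ref{prop1} and \ref{prop2}, and the same appeal to Lemma \ref{lemma1} via the graph of the bijection $mj_1-i\la_0\mapsto i+mj_2$. The valuation computations you sketch, including the collapse $\ceil*{i\la_1/m}=1$, all check out.
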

\begin{proof}
Without loss of generality, suppose that $\ell=1$ and define the set
\begin{align*}
\Gamma:=\Bigg\{(mj_1-i\la_0, i+mj_2)\in \N^2 &\mid 1\leq i \leq m-1, \, j_1\geq \ceil*{\frac{i\la_0}{m}}, \, j_2\geq 0,\\ & j_1+j_2=\sum_{k=1}^{r}\ceil*{\frac{i\la_k}{m}} -1 \Bigg\}.
\end{align*}
Since $j_1\geq \ceil*{\frac{i\la_0}{m}}$, $j_2\geq 0$, and $j_1+j_2=\sum_{k=1}^{r}\ceil*{\frac{i\la_k}{m}}-1$, we have
$$
\ceil*{\frac{i\la_0}{m}}\leq j_1 \leq \sum_{k=1}^{r}\ceil*{\frac{i\la_k}{m}}-1\quad \text{and}\quad 0\leq j_2\leq \left(\sum_{k=1}^{r}\ceil*{\frac{i\la_k}{m}}\right)-\ceil*{\frac{i\la_0}{m}}-1.
$$
From Propositions \ref{prop1} and \ref{prop2}, it follows that $\Gamma \subseteq G(Q_{\infty})\times G(Q_1)$. 

On the other hand, for $(mj_1-i\la_0, i+mj_2)\in \Gamma$ and from (\ref{div}), we have the following divisor
\begin{align*}
&\left((x-\al_1)^{-j_2}y^{-i}\prod_{k=2}^{r}(x-\al_k)^{\ceil*{\frac{i\la_k}{m}}}\right)_{\cF}\\
&=\sum_{k=2}^{r}\frac{m\ceil*{\frac{i\la_k}{m}}-i\la_k}{(m, \la_k)}\sum_{Q\in \cP_{\cF},\, Q|P_{k}}Q
-(mj_1-i\la_0) Q_{\infty}-(i+mj_2)Q_{1}
\end{align*}
and therefore $\Gamma\subseteq H(Q_1, Q_2)$. Thus, $\Gamma\subseteq (G(Q_{\infty})\times G(Q_1))\cap H(Q_{\infty}, Q_1)$. Similarly, as in the proof of Proposition \ref{propmgs1}, $\Gamma$ is the graph of the bijective map $\theta: G(Q_{\infty})\rightarrow G(Q_1)$ given by $\theta(mj_1-i\la_0)=i+mj_2$, which defines a permutation $\tau$ of the set $\{1, \dots, g(\cX)\}$ that satisfies the conditions of the Lemma \ref{lemma1}. It follows that $\Gamma=\Gamma(Q_{\infty}, Q_1)$.
\end{proof}

\section{One-point Codes}

In this section we construct one-point AG codes over Kummer extensions. We start by presenting a general construction of linear codes using the results obtained in the previous sections. As a consequence, we construct three families of one-point AG codes on function fields with many rational places and provide the exact value of their parameters.  

\begin{theorem}\label{AGcodeQneqQ_inf}
Let $\cX$ be the curve defined by $y^m=f(x)$, where $f(x) \in \fq[x]$ is a separable polynomial of degree $r\geq 3$. Let $Q\in \cP_{\fq(\cX)}$ be a totally ramified place in the extension $\fq(\cX)/\fq(x)$ such that $Q\neq Q_{\infty}$. For $a\in \{2, \dots, r-1\}$, define the divisors 
$$
G_a:=\left(am-\floor*{\frac{am}{r}}-1\right)Q\quad \text{and} \quad D:=\sum_{Q'\in \cX(\fq), Q'\neq Q}Q',
$$ 
where $\cX(\fq)$ is the set of $\fq$-rational places on the function field $\fq(\cX)$, and assume that $\deg(G_a)<N:=\deg (D)$. Then the linear AG code $C_{\cL}(D, G_a)$ has parameters
$$
\left[N, a+\sum_{i=1}^{a-1}\floor*{\frac{im}{r}}, d\geq N-m(a-1)\right].
$$
In addition, if $\# \{\gamma\in \fq \mid P_{\gamma} \in \cP_{\fq(x)} \text{ splits completely in }\fq(\cX)/\fq(x)\}\geq a-1$, then the minimum distance of the linear code $C_{\cL}(D, G_a)$ is exactly $d=N-m(a-1)$.
\end{theorem}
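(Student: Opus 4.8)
The plan is to establish the two pieces of the statement in order: first the parameters $[N,k,d\geq N-m(a-1)]$, and then the exactness of the minimum distance under the splitting hypothesis. For the dimension and the Goppa lower bound on $d$, I would invoke Proposition \ref{goppabound}. This requires checking the hypothesis $2g-2<\deg(G_a)<N$. The upper inequality is assumed. For the lower one, since $\deg(G_a)=am-\floor*{\frac{am}{r}}-1$ and $g=g(\cX)=\frac{(m-1)(r-1)}{2}$ in the separable degree-$r$ case (by Proposition \ref{prop3}, where $\la_1=\cdots=\la_r=1$), I would verify $2g-2<\deg(G_a)$ directly from $a\geq 2$; the point is that $\deg(G_a)$ grows with $a$ and the choice $a\leq r-1$ keeps things in range. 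Once this holds, Proposition \ref{goppabound} gives $k=\deg(G_a)+1-g$ and $d\geq N-\deg(G_a)$.

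\textbf{Computing the dimension explicitly.}
The claimed dimension is $a+\sum_{i=1}^{a-1}\floor*{\frac{im}{r}}$, so I would show $\deg(G_a)+1-g$ equals this. Substituting $\deg(G_a)=am-\floor*{\frac{am}{r}}-1$ and $g=\frac{(m-1)(r-1)}{2}$, this is a finite arithmetic identity involving the floor sums $\sum_{i=1}^{a-1}\floor*{\frac{im}{r}}$; I expect it to follow from the reciprocity-type evaluation $\sum_{j=1}^{r-1}\floor*{\frac{jm}{r}}=\frac{(m-1)(r-1)}{2}$ already used in the proof of Proposition \ref{prop3}, restricted and reindexed appropriately. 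For the Goppa bound on $d$, I would improve $N-\deg(G_a)$ to the stated $N-m(a-1)$ by applying Theorem \ref{garcia}: the key observation is that $\deg(G_a)=am-\floor*{\frac{am}{r}}-1$ is itself a gap at $Q$ sitting at the top of a run of consecutive gaps. Using the description $G(Q)=\{mj-i\mid 1\leq j\leq r-1,\ \floor*{\frac{jm}{r}}+1\leq i\leq m-1\}$ from Proposition \ref{prop3}, I would identify the longest consecutive gap sequence ending at $\gamma=\deg(G_a)$ and count its length $t+1$, so that Theorem \ref{garcia} upgrades the bound to $d\geq N-\deg(G_a)+t+1=N-m(a-1)$.

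\textbf{The exactness argument.}
For the final claim, I would exhibit a codeword of Hamming weight exactly $m(a-1)$, matching the lower bound. The hypothesis gives at least $a-1$ rational places $P_{\gamma_1},\dots,P_{\gamma_{a-1}}$ of $\fq(x)$ that split completely in $\fq(\cX)/\fq(x)$, each contributing $m$ rational places lying above it. The natural candidate is a function of the form $z=\prod_{s=1}^{a-1}(x-\gamma_s)$, viewed in $\cL(G_a)$: its zero divisor consists precisely of the $m(a-1)$ rational places above $\gamma_1,\dots,\gamma_{a-1}$, all in $\supp(D)$, giving a codeword with exactly $m(a-1)$ zero coordinates and hence weight $N-m(a-1)$. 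I would check that $z\in\cL(G_a)$ by computing its pole divisor: $z$ has a pole only at $Q_{\infty}$ of order $m(a-1)$, but since $Q\neq Q_{\infty}$ I must instead arrange the pole at $Q$; the cleaner choice is to take a function whose only pole is at $Q$ with pole order at most $\deg(G_a)$ and whose zeros are $m(a-1)$ split places. The main obstacle is precisely this: constructing such a function adapted to the finite place $Q$ rather than $Q_\infty$, likely by multiplying $\prod_s(x-\gamma_s)$ by a suitable power of $(x-\alpha)$ where $Q$ lies over $P_\alpha$, and verifying the resulting pole order at $Q$ does not exceed $am-\floor*{\frac{am}{r}}-1$. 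Once such a codeword is produced, its weight $N-m(a-1)$ meets the lower bound, forcing $d=N-m(a-1)$.
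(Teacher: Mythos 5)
Your overall architecture (dimension, then Theorem \ref{garcia} for the distance bound, then an explicit codeword for exactness) is the paper's, but the dimension step contains a genuine gap. You propose to invoke Proposition \ref{goppabound}, which requires $2g-2<\deg(G_a)$, and then to verify the identity $\deg(G_a)+1-g=a+\sum_{i=1}^{a-1}\floor*{\frac{im}{r}}$. Both steps fail. Take $r=5$, $m=7$, $a=2$: here $g=\frac{(m-1)(r-1)}{2}=12$, so $2g-2=22$, while $\deg(G_2)=14-\floor*{\frac{14}{5}}-1=11<22$; moreover $\deg(G_2)+1-g=0$, whereas the true dimension is $\ell(11Q)=3=a+\floor*{\frac{m}{r}}$, since by Proposition \ref{prop3} the gaps at $Q$ are $\{1,\dots,5\}\cup\{8,\dots,11\}\cup\{15,16\}\cup\{22\}$ and the only non-gaps in $[0,11]$ are $0,6,7$. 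For small $a$ the divisor $G_a$ sits well below degree $2g-2$, which is precisely why the theorem is of interest: the dimension strictly exceeds the Riemann--Roch lower bound, so no "reciprocity-type identity" can rescue the computation. The paper instead computes $\ell(G_a)$ directly from the Weierstrass gap structure: $G(Q)$ decomposes into the disjoint consecutive runs $(j-1)m+1,\dots,(j-1)m+\left(m-\floor*{\frac{jm}{r}}-1\right)$ for $1\leq j\leq r-1$, and counting the non-gaps in $[0,\deg(G_a)]$ yields $\ell(G_a)=a+\sum_{i=1}^{a-1}\floor*{\frac{im}{r}}$ with no appeal to Proposition \ref{goppabound}.

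The other two steps are essentially right. Your use of Theorem \ref{garcia} is exactly the paper's: $\deg(G_a)$ is the last element of the $j=a$ run, of length $t+1=m-\floor*{\frac{am}{r}}-1$, and $N-\deg(G_a)+t+1=N-m(a-1)$. For exactness you correctly diagnose the obstacle (the pole must be placed at $Q$, not $Q_\infty$) and the remedy, but leave it unexecuted; the paper takes $z=(x-\beta)^{1-a}\prod_{i=1}^{a-1}(x-\gamma_i)$, where $Q$ is the unique place over $P_\beta$, so that $(z)_\infty=m(a-1)Q$ and $(z)_0$ consists of the $m(a-1)$ rational places above the split $\gamma_i$. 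The membership $z\in\cL(G_a)$ reduces to $m(a-1)\leq am-\floor*{\frac{am}{r}}-1$, which holds because $a\leq r-1$ forces $\floor*{\frac{am}{r}}\leq m-1$; this verification should be carried out rather than flagged as an open obstacle.
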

\begin{proof}
From Proposition \ref{prop3}, the gap set $G(Q)$ can be decomposed as the disjoint union of the following consecutive sequences of gaps
\begin{equation}\label{gapsequence}
(j-1)m+1, (j-1)m+2, \dots, (j-1)m+\left(m-\floor*{\frac{jm}{r}}-1\right)
\end{equation}
of length $m-\floor*{\frac{jm}{r}}-1$
for $1\leq j \leq r-1$. Given $a$ in $\{2, \dots, r\}$, from (\ref{gapsequence}), we deduce that
$$
\ell(G_a)=\ell\left(\left(am-\floor*{\frac{am}{r}}-1\right)Q\right)=a+\sum_{i=1}^{a-1}\floor*{\frac{im}{r}}.
$$
Thus, since $\deg(G_a)<\deg (D)$ and from Theorem \ref{garcia}, the AG code $C_{\cL}(D, G_a)$ has parameters
$$
\left[N, a+\sum_{i=1}^{a-1}\floor*{\frac{im}{r}}, d\geq N-m(a-1)\right].
$$
Now, let $S:=\{\gamma\in \fq\mid P_{\gamma} \in \cP_{\fq(x)} \text{ splits completely in the extension }\fq(\cX)/\fq(x)\}$ and suppose that $\#S\geq a-1$. For $ 2 \leq a \leq r-1$, consider the function 
$$
z:=(x-\beta)^{1-a}\prod_{i=1}^{a-1}(x-\gamma_i),
$$
where $\gamma_i\in S$ and $\beta\in \fq$ is such that $Q$ is the only place over $P_{\beta}\in\cP_{\fq(x)}$. Then $z$ is in $\cL((am-\floor*{am/r}-1)Q)$ and has exactly $m(a-1)$ distinct zeros. The weight of the corresponding codeword is $N-m(a-1)$ and the result follows.
\end{proof}

In the following, we apply Theorem \ref{AGcodeQneqQ_inf} to  construct one-point AG codes on function fields with many rational places.

Let $q=p^n$, $p$ prime and $n\geq 2$. In \cite{AG2004} Abd\'{o}n and Garcia showed that there exists a unique $\fqs$-maximal function field with genus $q(q/p-1)/2$ having a place such that $q/p$ is a non-gap at this place. We present a first family of codes over this function field.
\begin{corollary}\label{propc1}
Let $q=p^n$, $p$ prime, and $n\geq 2$ such that $3\leq q/p$. For each $2\leq a \leq q/p-1$, it exists an AG code over $\fqs$ with parameters 
$$
\left[\frac{q^3}{p}, a+\frac{pa(a-1)}{2}, \frac{q^3}{p}-(q+1)(a-1)\right].
$$
\end{corollary}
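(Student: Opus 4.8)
The plan is to realize the Abd\'on--Garcia function field as a Kummer extension of the form \eqref{curveX} and then invoke Theorem \ref{AGcodeQneqQ_inf} directly. Concretely, I would take $m=q+1$ and let $f(x)\in\fqs[x]$ be the separable polynomial of degree $r=q/p$ whose zeros define the curve of \cite{AG2004}, with all multiplicities $\la_1=\cdots=\la_r=1$; note that $p\nmid q+1$ and $(q+1,\,q/p)=1$ since $q/p=p^{n-1}$ is a power of $p$, so \eqref{curveX} applies. The first step is to justify this identification. By \cite[Proposition 3.7.3]{S2009} the genus equals $(m-1)(r-1)/2=q(q/p-1)/2$, which matches the genus of the Abd\'on--Garcia field, and since $y$ has pole divisor $(q/p)Q_{\infty}$ the integer $q/p$ is a non-gap at $Q_{\infty}$; the uniqueness in \cite{AG2004} then pins down the field. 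With $m=q+1$ and $r=q/p$ fixed, the hypotheses of Theorem \ref{AGcodeQneqQ_inf} ($f$ separable of degree $r\geq 3$) hold, and the range $2\leq a\leq q/p-1$ of the corollary is exactly $a\in\{2,\dots,r-1\}$.

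Next I would count the rational places. Since the field is $\fqs$-maximal of genus $g=q(q/p-1)/2$, the Hasse--Weil bound \eqref{maximal} applied over $\fqs$ is attained, giving exactly $q^2+1+2gq=q^3/p+1$ rational places. Choosing $Q$ to be a totally ramified rational place with $Q\neq Q_{\infty}$ and setting $D=\sum_{Q'\neq Q}Q'$ yields the length $N=\deg(D)=q^3/p$. To see that such a $Q$ exists, let $A$ and $B$ count the $\gamma\in\fqs$ with $f(\gamma)$ a nonzero $(q+1)$-th power, respectively with $f(\gamma)=0$; since every rational place lies over $\infty$, over a completely split $P_{\gamma}$ (contributing $q+1$ rational places), or over a ramified $P_{\gamma}$ (contributing one), we get $(q+1)A+B+1=q^3/p+1$. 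If $B=0$ this forces $(q+1)\mid q^3/p=p^{3n-1}$, impossible since $(q+1,\,p)=1$ and $q+1>1$; hence $B\geq 1$ and a finite totally ramified rational place exists.

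Then I would apply Theorem \ref{AGcodeQneqQ_inf} with this $Q$ for each $a\in\{2,\dots,q/p-1\}$. The degree condition $\deg(G_a)<N$ is routine for $n\geq 2$. For the dimension, $\floor*{im/r}=\floor*{ip+i/p^{n-1}}=ip$ for $1\leq i\leq a-1\leq q/p-2<q/p=p^{n-1}$, so
$$
a+\sum_{i=1}^{a-1}\floor*{\frac{im}{r}}=a+p\,\frac{a(a-1)}{2},
$$
and $m(a-1)=(q+1)(a-1)$ gives the Goppa distance $d\geq q^3/p-(q+1)(a-1)$. To upgrade this to equality I would verify the splitting hypothesis of Theorem \ref{AGcodeQneqQ_inf}: from $(q+1)A=q^3/p-B\geq q^3/p-q/p=(q/p)(q^2-1)$ we obtain $A\geq (q/p)(q-1)\geq a-1$, so at least $a-1$ places of $\fqs(x)$ split completely. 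Feeding $a-1$ such split values $\gamma_i$ and $\beta$ with $Q\mid P_{\beta}$ into the function $z=(x-\beta)^{1-a}\prod_{i=1}^{a-1}(x-\gamma_i)$ produces a codeword of weight $N-(q+1)(a-1)$, so the minimum distance is exactly $q^3/p-(q+1)(a-1)$.

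I expect the main obstacle to be the precise identification of the Abd\'on--Garcia field with a Kummer extension \eqref{curveX} of the stated shape, namely justifying $m=q+1$, $r=q/p$, and the all-ones multiplicity vector through the genus formula and the non-gap condition on $q/p$. Once that is settled, the remaining work is bookkeeping: evaluating the floor-function formulas of Theorem \ref{AGcodeQneqQ_inf} and carrying out the maximal place count above.
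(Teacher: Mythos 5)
Your proposal is correct and follows essentially the same route as the paper: the paper simply exhibits the Abd\'on--Garcia field via the explicit equation $cy^{q+1}=x^{q/p}+\cdots+x$ (a Kummer extension with $m=q+1$, $r=q/p$, all multiplicities $1$) and invokes Theorem \ref{AGcodeQneqQ_inf}. Your additional details --- the place count $q^3/p+1$ from maximality, the existence of a finite totally ramified rational place, the evaluation $\floor*{im/r}=ip$, and the verification of the splitting hypothesis --- are all accurate fillings-in of steps the paper leaves implicit.
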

\begin{proof} Consider the $\fqs$-maximal function field of the curve defined by the affine equation 
$$cy^{q+1}=x^{q/p}+x^{q/p^2}+\cdots + x^p+x, \quad c^q+c=0 \text{ and } c\neq 0$$ 
with genus $g= q(q/p-1)/2$. From Theorem \ref{AGcodeQneqQ_inf} the result follows.
\end{proof}

In the following result we present one-point AG codes over a generalization of the Hermitian function field given by Garcia in \cite[Example 1.3]{G1992}.

\begin{corollary}\label{propc2}
For $q \geq 3$, $n$ an odd integer, and $2\leq a \leq q-1$, it exists an AG code over $\fqsn$ with parameters
$$
\left[q^{2n+1}, a+\frac{a(a-1)q^{n-1}}{2}, q^{2n+1}-(q^n+1)(a-1)\right].
$$
\end{corollary}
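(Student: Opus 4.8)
The plan is to exhibit these codes as instances of Theorem~\ref{AGcodeQneqQ_inf} applied to Garcia's generalized Hermitian curve. Comparing the asserted parameters with the output of that theorem first pins down the curve: the minimum distance $q^{2n+1}-(q^n+1)(a-1)=N-m(a-1)$ forces $m=q^n+1$, and the dimension then forces $\deg f=r=q$, so the relevant Kummer extension is
$$
\cX:\quad y^{q^n+1}=x^q+x
$$
over the constant field $\fqsn$, with all multiplicities $\la_1=\cdots=\la_q=1$ (note $(m,\la_0)=(q^n+1,q)=1$, so the hypotheses of Section~4 hold and Proposition~\ref{prop3} describes $G(Q)$). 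I would begin by recalling from \cite{G1992} that, for $n$ odd, $\cX$ is $\fqsn$-maximal. Its genus is $g=(m-1)(r-1)/2=q^n(q-1)/2$, so the Hasse--Weil bound (\ref{maximal}), applied over $\fqsn$ with $\sqrt{q^{2n}}=q^n$, is attained and
$$
\#\cX(\fqsn)=q^{2n}+1+2g\,q^n=q^{2n+1}+1.
$$

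Next I would take $Q$ to be the totally ramified place lying over the (simple) root $x=0$ of $f$, so that $Q\neq Q_\infty$, and $D=\sum_{Q'\neq Q}Q'$, giving $N=\deg(D)=q^{2n+1}$. Applying Theorem~\ref{AGcodeQneqQ_inf} with $m=q^n+1$, $r=q$ and $a\in\{2,\dots,q-1\}$, the only computation is the floor arithmetic: for $1\leq i\leq a-1\leq q-2$ we have $0\leq i/q<1$, hence
$$
\floor*{\frac{im}{r}}=\floor*{\frac{i(q^n+1)}{q}}=iq^{n-1}+\floor*{\frac{i}{q}}=iq^{n-1}.
$$
Substituting into the dimension formula of Theorem~\ref{AGcodeQneqQ_inf} gives
$$
a+\sum_{i=1}^{a-1}\floor*{\frac{im}{r}}=a+q^{n-1}\sum_{i=1}^{a-1}i=a+\frac{a(a-1)q^{n-1}}{2},
$$
while the distance bound reads $d\geq N-m(a-1)=q^{2n+1}-(q^n+1)(a-1)$, recovering all three parameters.

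Finally, to upgrade the bound to an equality I would verify the splitting hypothesis of Theorem~\ref{AGcodeQneqQ_inf}, namely $\#S\geq a-1$. A place $P_\gamma$ of $\fqsn(x)$ with $f(\gamma)\neq 0$ carries a rational place of $\cX$ above it if and only if $T^{q^n+1}=f(\gamma)$ has a root in $\fqsn$; since $m=q^n+1$ divides $q^{2n}-1$ with quotient $q^n-1$, this happens exactly when $f(\gamma)\in\fqn^*$, and in that case $P_\gamma$ splits completely into $q^n+1$ rational places. Thus every rational place of $\cX$ other than $Q_\infty$ and the $q$ ramified places over the roots of $f$ lies over a completely split $\gamma$, so
$$
(q^n+1)\,\#S=\#\cX(\fqsn)-1-q=q^{2n+1}-q,\qquad \#S=q^{n+1}-q.
$$
Since $q^{n+1}-q\geq q-1\geq a-1$, the last part of Theorem~\ref{AGcodeQneqQ_inf} yields the exact value $d=q^{2n+1}-(q^n+1)(a-1)$.

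The step I expect to carry the real content is this last one: identifying, uniformly in the characteristic, which places of the rational subfield split completely and counting them through the total number of rational places. The naive attempt to use $\gamma\in\fq^*$ breaks down in characteristic two, where $x^q+x$ vanishes on all of $\fq$ and those points become ramified rather than split; the global count via maximality is what makes the exactness argument go through in every case.
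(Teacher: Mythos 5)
Your proposal is correct and follows exactly the route the paper takes: it applies Theorem~\ref{AGcodeQneqQ_inf} to Garcia's maximal curve $y^{q^n+1}=x^q+x$ over $\fqsn$, with the same genus, point count, and floor computation $\floor*{i(q^n+1)/q}=iq^{n-1}$. The only difference is that you explicitly verify the complete-splitting hypothesis ($\#S=q^{n+1}-q\geq a-1$) needed for the exact minimum distance, a step the paper's one-line proof leaves implicit.
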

\begin{proof}
Consider the $\fqsn$-maximal function field of the curve defined by the equation
$$
y^{q^n+1}=x^q+x.
$$
This function field has genus $g=q^n(q-1)/2$ and note that when $n = 1$ we get the Hermitian function field. Using Theorem \ref{AGcodeQneqQ_inf}, we obtain one-point codes with the desired parameters.
\end{proof}

As a last application, we construct one-point AG codes over the function field of the Norm-Trace curve. This function field was studied in detail by Geil in \cite{G2003}.

\begin{corollary}\label{propc3}
For $q\geq 3, n \geq 2, $ and $2\leq a \leq q^{n-1}-1$, we obtain one-point AG codes over $\fqn $ with parameters
$$
\left[q^{2n-1}, \frac{a(a+1)}{2}+\sum_{i=1}^{a-1}\floor*{\frac{i(q^{n-1}-1)}{q^{n-1}(q-1)}}, q^{2n-1}-\frac{(a-1)(q^n-1)}{q-1}\right].
$$
\end{corollary}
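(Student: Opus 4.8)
The plan is to realize the Norm--Trace curve as a Kummer extension of the form (\ref{curveX}) with all multiplicities equal to $1$ and then apply Theorem \ref{AGcodeQneqQ_inf} directly; the substance of the argument lies in checking the hypotheses and in simplifying the output parameters.

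First I would set $m=(q^n-1)/(q-1)$ and consider the curve
$$
\cX:\quad y^{m}=x^{q^{n-1}}+x^{q^{n-2}}+\cdots+x^{q}+x
$$
over $\fqn$, so that the right-hand side is $f(x)=\mathrm{Tr}_{\fqn/\fq}(x)$ while $x^{m}=\mathrm{N}_{\fqn/\fq}(x)$. Since the trace map is $\fq$-linear and surjective with kernel of size $q^{n-1}$, the polynomial $f$ is separable of degree $r=q^{n-1}\geq 3$ with all $q^{n-1}$ of its roots lying in $\fqn$. Moreover $m\equiv 1\Mod{q}$, hence $p\nmid m$ and $(m,\la_0)=(m,r)=1$; so $\cX$ satisfies (\ref{curveX}) with $\la_1=\cdots=\la_r=1$.

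Next I would count the $\fqn$-rational places. A finite place $P_{\gamma}$ splits completely in $\fqn(\cX)/\fqn(x)$ precisely when $f(\gamma)=\mathrm{Tr}(\gamma)$ is a nonzero $m$-th power in $\fqn$; since $z\mapsto z^{m}$ is exactly the norm $\mathrm{N}_{\fqn/\fq}$, whose image is $\fq^{*}$, this occurs if and only if $\mathrm{Tr}(\gamma)\neq 0$. There are $q^{n}-q^{n-1}$ such $\gamma$, each contributing $m$ rational places, whereas the $q^{n-1}$ roots of $f$ are totally ramified, and the unique place at infinity is totally ramified as well. Summing gives $(q^{n}-q^{n-1})m+q^{n-1}+1=q^{2n-1}+1$ rational places, so removing the chosen place leaves $N=q^{2n-1}$. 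I then take $Q$ to be the totally ramified place over a root of $f$ (so $Q\neq Q_{\infty}$) and invoke Theorem \ref{AGcodeQneqQ_inf}; the hypothesis $\deg(G_a)<N$ holds because $\deg(G_a)<am\leq (r-1)m<q^{2n-1}$, a short estimate valid for $q\geq 3$. This yields the length $N=q^{2n-1}$ together with the bound $d\geq N-m(a-1)=q^{2n-1}-(a-1)(q^{n}-1)/(q-1)$.

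It remains to simplify the two parameters. For the dimension I would use $q^{n}-1=q^{n-1}(q-1)+(q^{n-1}-1)$ to write $\frac{im}{r}=i+\frac{i(q^{n-1}-1)}{q^{n-1}(q-1)}$, so that $\sum_{i=1}^{a-1}\floor*{\frac{im}{r}}=\frac{a(a-1)}{2}+\sum_{i=1}^{a-1}\floor*{\frac{i(q^{n-1}-1)}{q^{n-1}(q-1)}}$, and therefore $a+\sum_{i=1}^{a-1}\floor*{\frac{im}{r}}=\frac{a(a+1)}{2}+\sum_{i=1}^{a-1}\floor*{\frac{i(q^{n-1}-1)}{q^{n-1}(q-1)}}$, matching the claim. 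For the exact distance, the number of completely split places equals $q^{n}-q^{n-1}$, which exceeds $a-1$ throughout the range $2\leq a\leq q^{n-1}-1$; the exactness clause of Theorem \ref{AGcodeQneqQ_inf} then forces $d=N-m(a-1)$. The main obstacle is purely the bookkeeping: correctly reading off the norm and trace structure so that both the point count and the split-place count come out cleanly, and massaging the floor-function sum into the stated closed form.
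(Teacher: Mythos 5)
Your proposal is correct and follows the same route as the paper: the paper's proof simply records the genus and the count of $q^{2n-1}+1$ rational places of the Norm--Trace curve and then invokes Theorem \ref{AGcodeQneqQ_inf}, exactly as you do. You merely make explicit the verifications the paper leaves implicit (that $p\nmid m$ and $(m,r)=1$, that the split places are those with $\mathrm{Tr}(\gamma)\neq 0$ because $(\fqn^{*})^{m}=\fq^{*}$, that $\deg(G_a)<N$, and the floor-sum simplification), all of which are accurate.
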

\begin{proof} Let $n\geq 2$ be an integer. The Norm-Trace curve over $\fqn$ is defined by
$$
y^{\frac{q^n-1}{q-1}}=x^{q^{n-1}}+x^{q^{n-2}}+\cdots +x.
$$
Its function field has genus $g=\frac{q(q^{n-1}-1)^2}{2(q-1)}$ and $q^{2n-1}+1$ rational places over $\fqn$. 
The desired result follows immediately by applying Theorem \ref{AGcodeQneqQ_inf} to this function field.
\end{proof}

Now we provide a family of AG codes with exact parameters over the function field $\fqsn(\cY_{m})$, where $\cY_m$ is defined by the Equation $(\ref{curvecode})$.

Let $q\geq 4$ even, $n\geq 3$ be an odd integer,  and $m\geq 2$ be an integer such that $m$ divides $q^n+1$ and $q+1$ divides $m$. Consider the curve given by the affine equation 
\begin{equation}\label{curvecode}
\cY_{m}:\quad y^{m}=x(x+1)\left(\frac{x^{q-1}+1}{x+1}\right)^{q+1}.
\end{equation}
This curve is a subcover of the Beelen-Montanucci curve \cite{BM2018} and first appeared in \cite[Theorem 3.1]{MQ2022}. Its function field $\fqsn(\cY_m)$ is maximal over $\fqsn$, has genus $$g=\frac{m(q-1)-q^2+q+1}{2}$$ and the number of rational places is 
$$\#\cY_{m}(\fqsn)=q^{2n}-q^{n+2}+(m+1)q^{n+1}-(m-1)q^n+1.$$  
The only totally ramified places in the extension $\fqsn(\cY_m)/\fqsn(x)$ are the places $Q_\infty$ and $Q_\alpha$ that lie over the places $P_\infty$ and $P_{\al}$ in $\cP_{\fqsn(x)}$ for $\al \in \{0, 1\}$. For $\be$ in $\fq\setminus \{0, 1\}$, the place $P_{\be}\in \cP_{\fqsn(x)}$ has exactly $q+1$ rational places in $\fqsn(\cY_{m})$ over $P_{\be}$. 
From \cite[Theorem 4]{OT2014}, for each  $\gamma\in\fqsn\setminus \fq$ the place  $P_{\gamma}\in \cP_{\fqsn(x)}$ has exactly none or $m$ rational places in $\fqsn(\cY_{m})$ over $P_{\gamma}$.
Let $u$ be the number of elements $\gamma\in\fqsn\setminus \fq$ such that $P_{\gamma}\in \cP_{\fqsn(x)}$ has exactly $m$ rational places in $\fqsn(\cY_{m})$ over $P_{\gamma}$. Then 
\begin{equation*}
\#\cY_{m}(\fqsn)=3+(q+1)(q-2)+um
\end{equation*} 
and we conclude $u=(q^n-q^2+q)(q^n+1)/m+q^{n+1}-q^n$. 

For $q-1\leq k\leq u$ consider the divisors
$$
G:=kmQ_\infty
\quad 
\text{and}
\quad
D:=\sum_{Q\in \cY_m(\fqsn), \, Q\neq Q_{\infty}}Q,
$$
and the code $C_{\cL}(D,G)$.
This code has length 
$$N:=\deg(D)=q^{2n}-q^{n+2}+(m+1)q^{n+1}-(m-1)q^n$$ and its minimum distance is $d=N-km$. In fact, the function $z_k=\prod_{i=1}^k(x-\gamma_i)$ in $\cL(kmQ_\infty)$, where $\gamma_i\in \fqsn\setminus\fq$ is such that $P_{\gamma_i}$ splits completely in $\fqsn(\cY_{m})/\fqsn(x)$, has exactly $km$ distinct zeros and the weight of the corresponding codeword is $N-km$. Also, since $2g-2 < km < N$, the code has dimension $km+1-g$. We summarize this result in the next proposition.

\begin{proposition}\label{propc4}
Let $q\geq 4$ be even, $n \geq 3$ odd, and $m\geq 2$ be an integer such that $m$ divides $q^n+1$ and $q+1$ divides $m$. For $q-1\leq k\leq (q^n-q^2+q)(q^n+1)/m+q^{n+1}-q^n$, it exists a linear code over $\fqsn$ with parameters
\begin{equation*}\label{par2}
[N, km+1-((q-1)m-q^2+q+1)/2, N-km],
\end{equation*}
where $N=q^{2n}-q^{n+2}+(m+1)q^{n+1}-(m-1)q^n$.
\end{proposition}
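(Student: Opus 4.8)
The plan is to read the three parameters off the structural data about $\cY_m$ collected just before the statement, using the Goppa estimates of Proposition \ref{goppabound} for the length and dimension and an explicit codeword to pin down the minimum distance exactly. First I would handle the length: since $G=kmQ_\infty$ and $D=\sum_{Q\neq Q_\infty}Q$ have disjoint supports (the only place omitted from $D$ is $Q_\infty$), the length equals $N=\deg(D)=\#\cY_m(\fqsn)-1=q^{2n}-q^{n+2}+(m+1)q^{n+1}-(m-1)q^n$, which is the asserted value.

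Next, to obtain the dimension I would verify the hypothesis $2g-2<\deg(G)<N$ of Proposition \ref{goppabound}. Writing $\deg(G)=km$ and $2g-2=(q-1)m-(q^2-q+1)$, the lower inequality $2g-2<km$ follows from $k\geq q-1$ because $q^2-q+1>0$. For the upper inequality, the count $\#\cY_m(\fqsn)=3+(q+1)(q-2)+um$ gives $N=2+(q+1)(q-2)+um$, whence $km\leq um<N$ whenever $k\leq u$. With $2g-2<km<N$ established, Proposition \ref{goppabound} yields simultaneously $\dim C_{\cL}(D,G)=km+1-g$ and the lower bound $d\geq N-km$.

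It then remains to show the distance is exactly $N-km$, and for this I would exhibit a codeword of that weight. Choosing $k$ distinct elements $\gamma_1,\dots,\gamma_k\in\fqsn\setminus\fq$ whose places $P_{\gamma_i}\in\cP_{\fqsn(x)}$ split completely in $\fqsn(\cY_m)/\fqsn(x)$ — which is possible exactly because there are $u\geq k$ such elements — I set $z_k=\prod_{i=1}^k(x-\gamma_i)$. Each factor $x-\gamma_i$ has pole divisor $mQ_\infty$, since $Q_\infty$ is the unique, totally ramified, place over $P_\infty$; hence $(z_k)_\infty=kmQ_\infty$ and $z_k\in\cL(G)$. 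Moreover each $x-\gamma_i$ vanishes precisely at the $m$ distinct rational places above $P_{\gamma_i}$, all lying in $\supp(D)$, and these $km$ zeros are pairwise distinct. Thus the codeword $(z_k(Q))_{Q\neq Q_\infty}$ has exactly $km$ zero entries, so its weight is $N-km$, matching the Goppa bound and forcing $d=N-km$.

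The hard part will be the exactness of the distance, that is, guaranteeing the explicit low-weight codeword exists; this is where the two-sided constraint $q-1\leq k\leq u$ is genuinely used, the lower bound for the Riemann--Roch regime $km>2g-2$ and the upper bound $k\leq u$ to supply enough completely split places. Everything else is bookkeeping: the value $u=(q^n-q^2+q)(q^n+1)/m+q^{n+1}-q^n$ counting the completely split places, the genus formula, and the pole/zero divisor of each $x-\gamma_i$, all of which follow from the ramification data recorded before the statement, so the argument is a synthesis of the preceding discussion rather than a new computation.
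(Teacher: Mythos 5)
Your proposal is correct and follows essentially the same route as the paper: the length is $\deg(D)$, the dimension comes from Proposition \ref{goppabound} after checking $2g-2<km<N$, and the exact minimum distance is obtained from the codeword of $z_k=\prod_{i=1}^k(x-\gamma_i)$ with the $P_{\gamma_i}$ completely split, which has precisely $km$ distinct zeros in $\supp(D)$. The paper's own argument (given in the paragraph preceding the proposition) is identical, so there is nothing to add.
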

\begin{remark}\label{singleton2}
In particular, for $m=q+1$ we have a code with parameters
$$[q^{2n}+q^{n+1}, k(q+1)+1-q/2, q^{2n}+q^{n+1}-k(q+1)]\;$$ 
and singleton defect $\delta=q/2$.
We notice that for  $q=4$ we get a code over $\fqsn$ with singleton defect $\delta=2$. 
\end{remark}

\section{Two-point Codes}\label{twopoint}

Previously, in Propositions \ref{propmgs1} and \ref{propmgs2} it was obtained a description of the minimal generating set at two totally ramified places in a Kummer extension. 
Now, benefiting from these descriptions and applying Proposition \ref{Gammasettopuregaps} and Theorem \ref{TorresCarvalho}, we construct two-point AG codes on the subcover of the Beelen-Montanucci curve described in (\ref{curvecode}). For two-point AG codes on the Beelen-Montanucci curve, see \cite{LV2022}.

For $q \geq 4$ even, consider the curve  in (\ref{curvecode}) for $m=q^n+1$, that is, 
\begin{equation}\label{curvecode1}
\cY_{q^n+1}:\quad y^{q^n+1}=x(x+1)\left(\frac{x^{q-1}+1}{x+1}\right)^{q+1},
\end{equation}
and let $\fqsn(x, y)$ be its function field.
Fix the place $Q_\infty$, and the other two totally ramified places of degree one $Q_0, Q_1$ in the extension $\fqsn(x, y)/\fqsn(x)$. By Propositions \ref{propmgs1} and \ref{propmgs2} we have the following descriptions of the minimal generating sets
\begin{align}\label{GammaQ0Q1}
\Gamma(Q_0,Q_1)=\Bigg\{(i+mj_1,i+mj_2) &\mid 1\leq i\leq m-1,\; j_1\geq 0,\; j_2\geq 0, \mbox{ and }\\
&\quad j_1+j_2=(q-2)\ceil*{ \frac{i}{s}}+1-\ceil*{\frac{i(q^2-q)}{m}} \Bigg\}\nonumber
\end{align}
and
\begin{align}\label{GammaQinfQ}
\Gamma(Q_\infty,Q)=
\Bigg\{(mj_1-i(q^2-q),i+mj_2) &\mid 1\leq i\leq m-1,\; j_1\geq \left\lceil\frac{i(q^2-q)}{m}\right\rceil,\; j_2\geq 0,\\
&\quad \mbox{and }j_1+j_2=(q-2)\left\lceil \frac{i}{s}\right\rceil+1 \Bigg\}, \nonumber
\end{align}
where $Q\in \{Q_0, Q_1\}$. 

Furthermore, we notice that the divisor 
$$W:=(2g-2)Q_{\infty}=(q^{n+1}-q^n-q^2+2q-2)Q_\infty$$ is a canonical divisor of the function field $\fqsn(x, y)$. In fact, from \cite[Theorem 4.4]{M2022}, the Weierstrass semigroup $H(Q_{\infty})$ is symmetric and therefore $2g-1\in G(Q_{\infty})$. From the Riemann-Roch Theorem, $\ell(uQ_{\infty})=u+1-g$ for $u\geq 2g-1$ and therefore
$$
\ell(W)=\ell((2g-2)Q_{\infty})=\ell((2g-1)Q_{\infty})=g.
$$
With the same notation above, we present pure gaps at two rational places on the function field of the curve $\cY_{q^n+1}$.

\begin{proposition}\label{puregaps}
Let $q\geq 4$ be even, $n\geq 3$ be an odd integer, $s=\frac{q^n+1}{q+1}$, and $Q \in \{Q_0, Q_1\} $ a totally ramified place in the extension $\fqsn(x,y)/\fqsn(x)$ as in (\ref{curvecode1}). Then 
\begin{enumerate}[i)]
\item For $0\leq a\leq s-2$ and $1\leq b\leq s-1-a$ we have 
$$((q^n+1)(q-1)-(s-a)(q^2-q), b) \in G_0(Q_\infty, Q).$$ 
\item  For  $1\leq a_i\leq q+1$ and $1\leq b_i \leq (q^2-q-2a_i)\frac{(q^{n-1}-1)}{q^2-1}$ for $i=1, 2$ we have
$$\left(\frac{a_2(q^n-2q^{n-1}+1)}{q-1}-b_2, \frac{a_1(q^n-2q^{n-1}+1)}{q-1}-b_1\right)\in G_0(Q_0, Q_1).$$
\item For
\begin{itemize}
\item $1\leq a_2\leq a_1\leq q+1,$ 
\item $0 \leq b_1 \leq 
\begin{cases}
2a_1\frac{(q^{n-1}-1)}{q^2-1}, \text{ if } 1 \leq a_1 \leq \frac{q}{2},\\
\frac{q^{n-1}-q}{q-1}, \text{ if } \frac{q}{2}+1 \leq a_1 \leq q+1, 
\end{cases}$ 
\item $ 1\leq b_2\leq (q^2-q-2a_2)\frac{(q^{n-1}-1)}{q^2-1},$\\
\end{itemize}
 the pairs
$$
\left(\frac{a_1(q^n-2q^{n-1}+1)}{q-1}+b_1, \frac{a_2(q^n-2q^{n-1}+1)}{q-1}-b_2\right)
$$
and 
$$
\left(\frac{a_2(q^n-2q^{n-1}+1)}{q-1}-b_2, \frac{a_1(q^n-2q^{n-1}+1)}{q-1}+b_1\right)
$$
are pure gaps at $Q_0, Q_1$.
\end{enumerate}
\end{proposition}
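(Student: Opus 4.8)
The three statements are all consequences of the pure-gap characterization in Proposition~\ref{Gammasettopuregaps}: a pair lies in $G_0$ precisely when it is the $\glb$ of two elements of the minimal generating set without itself belonging to that set. So for each claimed pair my plan is to produce two explicit generators, read off from the parametrizations (\ref{GammaQinfQ}) and (\ref{GammaQ0Q1}), whose greatest lower bound is the pair, and then to check that the pair is not a generator.

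Write $\la_0=q^2-q$ and $c:=\frac{q^n-2q^{n-1}+1}{q-1}$. The engine of every estimate will be the two identities
$$\frac{c\la_0}{m}=(q-2)+\frac{2}{m}\qquad\text{and}\qquad s-c=\frac{2(q^{n-1}-1)}{q^2-1},$$
obtained by direct division using $m=q^n+1$ and $s=\frac{q^n+1}{q+1}$; note that the upper bounds imposed on the $b_i$ are exactly integer multiples of $s-c$. Setting $N(i):=(q-2)\ceil*{i/s}+1-\ceil*{i\la_0/m}$ (so that, by Proposition~\ref{prop2}, the gaps at $Q_0$ of residue $i$ modulo $m$ are $i,i+m,\dots,i+mN(i)$, and the generators of $\Gamma(Q_0,Q_1)$ of residue $i$ are the pairs $(i+mj_1,i+mj_2)$ with $j_1+j_2=N(i)$), the first identity yields $\ceil*{(ca\pm b)\la_0/m}=a(q-2)+\ceil*{(2a\pm b\la_0)/m}$ and the second gives $\ceil*{(ca\pm b)/s}=a+\ceil*{(\pm b-a(s-c))/s}$. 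These reduce each verification to elementary sign-and-size comparisons of $2a\pm b\la_0$ against $m$ and of $\pm b-a(s-c)$ against $s$, all governed by the stated ranges; in particular one finds $ca\pm b<m$ throughout, so each such coordinate is its own residue with $j=0$.

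For (i) I take in $\Gamma(Q_\infty,Q)$ the generators $\x=((q-1)m-(s-a)\la_0,\,s-a)$ and $\y=((q-1)m-b\la_0,\,b)$, both with $j_2=0$; since $1\le b\le s-1-a<s-a\le s$ one checks $\ceil*{\,\cdot\,/s}=1$ in each, so both lie in (\ref{GammaQinfQ}), and $\glb(\x,\y)=((q-1)m-(s-a)\la_0,\,b)=((q^n+1)(q-1)-(s-a)(q^2-q),\,b)$, which differs from $\x$ and $\y$ and hence is not in $\Gamma(Q_\infty,Q)$. For (ii) I use the generators of $\Gamma(Q_0,Q_1)$ with first coordinate $ca_2-b_2$ and with second coordinate $ca_1-b_1$, namely $\x=(ca_2-b_2,\,(ca_2-b_2)+mN(ca_2-b_2))$ and $\y=((ca_1-b_1)+mN(ca_1-b_1),\,ca_1-b_1)$; the identities give $N(ca-b)\ge1$ on the stated range, so the two coordinates meant to be large exceed $m$, whence $\glb(\x,\y)=(ca_2-b_2,\,ca_1-b_1)$, and since $N\ge1$ this pair is not a generator. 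For (iii) the bound on $b_1$ (in both cases of the dichotomy) is exactly what keeps $ca_1+b_1<m$ a gap at $Q_0$ with $N(ca_1+b_1)=0$, so the generator with first coordinate $ca_1+b_1$ is $\x=(ca_1+b_1,\,ca_1+b_1)$; pairing it with $\y=((ca_2-b_2)+mN(ca_2-b_2),\,ca_2-b_2)$ and using $a_1\ge a_2$, $b_1\ge0$, $b_2\ge1$ to obtain $ca_1+b_1>ca_2-b_2$ gives $\glb(\x,\y)=(ca_1+b_1,\,ca_2-b_2)$, a pair not in $\Gamma(Q_0,Q_1)$. The companion pair follows from the symmetry of $\Gamma(Q_0,Q_1)$ under interchange of coordinates, which holds because within each residue class the bijection $\theta(i+mj_1)=i+m(N(i)-j_1)$ is an involution.

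The main obstacle is purely computational: pinning down the ceilings $\ceil*{i\la_0/m}$ and $\ceil*{i/s}$ for $i=ca\pm b$ so as to evaluate $N(i)$ exactly, and thereby seeing that the stated ranges for the $b_i$ — including the split $a_1\le q/2$ versus $a_1\ge q/2+1$ in (iii) — are precisely the conditions under which $ca\pm b$ remains a gap of the correct type with $ca\pm b<m$. Once these ceiling evaluations are in hand, the comparisons that identify the $\glb$ and exclude membership in $\Gamma(Q_0,Q_1)$ (respectively $\Gamma(Q_\infty,Q)$) are immediate from $a_1\ge a_2$ and the positivity of $s-c$.
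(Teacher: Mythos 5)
Your proposal is correct, and for item i) and for the generators $\w_{a,b}=(aM+b,aM+b)$ used in item iii) it coincides with the paper's argument (same choice of generators in $\Gamma(Q_\infty,Q)$ and the same ceiling computations as the paper's Claim~1). Where you genuinely diverge is in producing the generators with first (resp.\ second) coordinate equal to $aM-b$: the paper first proves directly that $(aM-b,aM-b)$ is a pure gap by exhibiting the explicit function $\mu=y^{aM-b-1}(x+1)\prod_{\al}(x-\al)^{1-a}$, computing its principal divisor, and applying Noether's Reduction Lemma \ref{fulton} with the canonical divisor $W=(2g-2)Q_\infty$, and only then infers from Proposition \ref{Gammasettopuregaps} that the generators $\vv^1_{a,b},\vv^2_{a,b}$ with positive $j_1,j_2$ must exist; you instead verify arithmetically that $N(aM-b)=(q-2)\ceil*{\tfrac{aM-b}{s}}+1-\ceil*{\tfrac{(aM-b)(q^2-q)}{m}}\geq 1$, using $\ceil*{\tfrac{aM-b}{s}}=a$ (which is exactly where the upper bound $b\leq(q^2-q-2a)\tfrac{q^{n-1}-1}{q^2-1}$ enters) and $2a-b(q^2-q)<0$, so that $(aM-b,\,aM-b+mN)$ and $(aM-b+mN,\,aM-b)$ are read off from (\ref{GammaQ0Q1}) with $N\geq 1$. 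I checked these ceiling evaluations and they are correct on the stated ranges, so your route is valid; it is more uniform and elementary, making Proposition \ref{Gammasettopuregaps} and the parametrization of $\Gamma(Q_0,Q_1)$ carry the whole proof, whereas the paper's detour through $\mu$ and the canonical divisor buys an independent function-theoretic certificate that does not depend on getting the ceiling estimate for $N(aM-b)$ exactly right. One small point to tighten: in item i) the assertion that the $\glb$ is not in $\Gamma(Q_\infty,Q)$ "because it differs from $\x$ and $\y$" should be justified by noting that $\Gamma(Q_\infty,Q)$ is the graph of a bijection, so $\y$ is the unique element of $\Gamma(Q_\infty,Q)$ with second coordinate $b$; the analogous uniqueness (both coordinates below $m$ forces $j_1=j_2=0$, hence equal coordinates) is what excludes the pairs in items ii) and iii).
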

\begin{proof}   
For the first item, choosing  $i=s-a, j_1=q-1$, and $j_2=0$ in Equation (\ref{GammaQinfQ}), we have that for $0\leq a \leq s-1$
$$\uu_a:=((q^n+1)(q-1)-(s-a)(q^2-q),s-a)\in \Gamma(Q_\infty,Q).$$
Thus, from Proposition \ref{Gammasettopuregaps} we obtain that
$$
\glb (\uu_a, \uu_{s-b})=((q^n+1)(q-1)-(s-a)(q^2-q), b)\in G_{0}(Q_{\infty}, Q)
$$  
for $0\leq a\leq s-2$ and $1\leq b \leq s-1-a$.

Now we are going to divide the proof of the second and third items into two steps. For simplicity let $M:=\frac{q^n-2q^{n-1}+1}{q-1}$. 

{\bf Claim 1: }For $1\leq a\leq q+1$ and $0\leq b \leq \tilde{M}:=\min\left\{2a\frac{(q^{n-1}-1)}{q^2-1}, \ceil*{\frac{q^n+1-2a}{q^2-q}}-1\right\}$, we have
$$
\w_{a, b}:=\left(aM+b, aM+b\right)\in \Gamma(Q_0, Q_1).
$$ 
{\it Proof of Claim 1: } Given $a$ and $b$, choose the values $j_1=0$, $j_2=0$, and $i=aM+b$ in the description of the set $\Gamma(Q_0, Q_1)$ given in (\ref{GammaQ0Q1}). Then we are left to prove that $1 \leq i \leq q^n $ and $(q-2)\ceil*{ \frac{i}{s}}+1-\ceil*{\frac{i(q^2-q)}{q^n+1}}=0$. Note that
\begin{align*}
\ceil*{\frac{i}{s}}&=\ceil*{\frac{a(q^n-2q^{n-1}+1)(q+1)}{(q-1)(q^n+1)}+\frac{b(q+1)}{q^n+1}}\\
&=\ceil*{\frac{a(q^{n+1}-q^n-2q^{n-1}+q+1)}{q^{n+1}-q^n+q-1}+\frac{b(q^2-1)}{(q^n+1)(q-1)}}\\
&=\ceil*{a-\frac{2a(q^{n-1}-1)}{(q^n+1)(q-1)}+\frac{b(q^2-1)}{(q^n+1)(q-1)}}\\
&=a+\ceil*{\frac{b(q^2-1)-2a(q^{n-1}-1)}{(q^n+1)(q-1)}}=a,
\end{align*}
since $0\leq b\leq \frac{2a(q^{n-1}-1)}{q^2-1}$.
Analogously, 
\begin{align*}
\ceil*{\frac{i(q^2-q)}{q^n+1}}&=\ceil*{\frac{a(q^n-2q^{n-1}+1)(q^2-q)}{(q-1)(q^n+1)}+\frac{b(q^2-q)}{q^n+1}}\\
&=\ceil*{\frac{a(q^{n+1}-2q^n+q)}{q^n+1}+\frac{b(q^2-q)}{q^n+1}}\\
&=a(q-2)+\ceil*{\frac{2a+b(q^2-q)}{q^n+1}}=a(q-2)+1,
\end{align*}
since $0\leq b\leq \ceil*{\frac{q^n+1-2a}{q^2-q}}-1.$

We now compute the minimum $\tilde{M}$. For $n \geq 3$ we are going to show that
\begin{align*}
\tilde{M}=
\begin{cases}
2a\frac{(q^{n-1}-1)}{q^2-1}, &\text{ if } 1 \leq a \leq \frac{q}{2},\\
\frac{q^{n-1}-q}{q-1}, &\text{ if } \frac{q}{2}+1 \leq a \leq q+1.
\end{cases}
\end{align*}
In fact, at first let $ 1 \leq a \leq \frac{q}{2}$, then 
$$\ceil*{\frac{q^n+1-2a}{q^2-q}}-1 \geq \frac{q^n+1-q}{q^2-q}-1=\frac{q^n+1-q^2}{q^2-q}\geq \frac{q^n-q}{q^2-1} \geq 2a\frac{(q^{n-1}-1)}{q^2-1}.$$
On the other hand, if $\frac{q}{2} + 1 \leq a \leq q+1$, then 
$$2a\frac{(q^{n-1}-1)}{q^2-1} \geq (q+2)\frac{q^{n-1}-1}{q^2-1}\geq \frac{q^n-q-1}{q^2-q}\geq \ceil*{\frac{q^n+1-2a}{q^2-q}}-1=\sum_{i=1}^{n-2} q^i=\frac{q^{n-1}-q}{q-1}.$$
Then it is easy to conclude that $i=aM+b \leq q^n$.
The claim follows.

{\bf Claim 2}: For $1\leq a\leq q+1$ and $1\leq b \leq (q^2-q-2a)\frac{(q^{n-1}-1)}{q^2-1}$, the pair
$$
\uu_{a, b}:=\left(aM-b, aM-b\right)\in G_0(Q_0, Q_1).
$$
{\it Proof of Claim 2: } From definition of pure gap we must prove that $\cL(E)=\cL(E-Q_i),$ where $E=\left(aM-b\right)(Q_0+Q_1)$, for $i=0, 1$. The cases $i=0,1$ are analogous, and then we prove only for $i=0$.

Given $a$ and $b$, define the function
$$
\mu:=y^{aM-b-1}(x+1)\prod_{\al\in \fq\setminus\{0, 1\}}(x-\al)^{1-a}\in \fqsn(x, y).
$$
After some computations, we obtain that the principal divisor of $\mu$ in $\fqsn(x, y)$  is 
\begin{align*}
(\mu)_{\fqsn(x, y)}&=\left(aM-b-1\right)Q_0+ \left(aM+q^n-b\right)Q_1\\
&\quad +\left(\frac{(q^{n-1}-1)(q^2-q-2a)}{q^2-1}-b\right)\sum_{Q|P_{\al}, \, \al\in \fq\setminus \{0, 1\}}Q\\
&\quad -\left(q^{n+1}-q^n+q-1+2a-(b+1)(q^2-q)\right)Q_{\infty}.
\end{align*}
We obtain that $\mu\in \cL(W-E+Q_0)\setminus \cL(W-E)$, where $W=(q^{n+1}-q^n-q^2+2q-2)Q_\infty$ is a canonical divisor of the function field $\fqsn(x, y)$. From Lemma \ref{fulton}, we conclude that $\cL(E)=\cL(E-Q_0)$. This completes the proof of the second claim. 

Since $\uu_{a, b}$ is a pure gap at $Q_0, Q_1$, from (\ref{GammaQ0Q1}) and Proposition \ref{Gammasettopuregaps}, it follows that there exists positive integers $j_1, j_2$ such that  $\uu_{a, b}=\glb(\vv^1_{a, b}, \vv^2_{a, b})$ for the  pairs
\begin{align*}
&\vv^1_{a, b}:=(aM-b+j_1(q^n+1), aM-b) \text{ and} \\
&\vv^2_{a, b}:=(aM-b, aM-b+j_2(q^n+1)) 
\end{align*}
in $\Gamma(Q_0, Q_1).$

Thus, for $1\leq a_k\leq q+1$, $1\leq b_k \leq (q^2-q-2a_k)\frac{(q^{n-1}-1)}{q^2-1}$ for $k=1, 2$ we have 
$$
\glb(\vv^1_{a_1, b_1}, \vv^2_{a_2, b_2})=\left(a_2M-b_2, a_1M-b_1\right)\in G_0(Q_0, Q_1).
$$
On the other hand, for $1\leq a_2\leq a_1\leq q+1$, $0\leq b_1\leq \min\left\{\frac{2a_1(q^{n-1}-1)}{q^2-1}, \ceil*{\frac{q^n+1-2a_1}{q^2-q}}-1\right\}$, and $1\leq b_2\leq (q^2-q-2a_2)\frac{(q^{n-1}-1)}{q^2-1}$ we obtain that
$$
\glb(\w_{a_1, b_1}, \vv^1_{a_2, b_2})=\left(a_1M+b_1, a_2M-b_2\right)\in G_0(Q_0, Q_1)
$$
and 
$$
\glb(\w_{a_1, b_1}, \vv^2_{a_2, b_2})=\left(a_2M-b_2, a_1M+b_1\right)\in G_0(Q_0, Q_1).
$$
\end{proof}

Note that Proposition \ref{Gammasettopuregaps} was the fundamental key for determining the pure gaps given in the previous proposition. To see a geometric interpretation of Proposition \ref{Gammasettopuregaps}, in Figure \ref{figure1} we determine the pure gap set $G_0(Q_0, Q_1)$ from the minimal generating set $\Gamma(Q_0, Q_1)$ on the curve $\cY_{q^n+1}$ defined in (\ref{curvecode1}) for the case $q=4$ and $n=3$.

\definecolor{ffqqqq}{rgb}{1.,0.,0.}
\begin{figure}[h!]
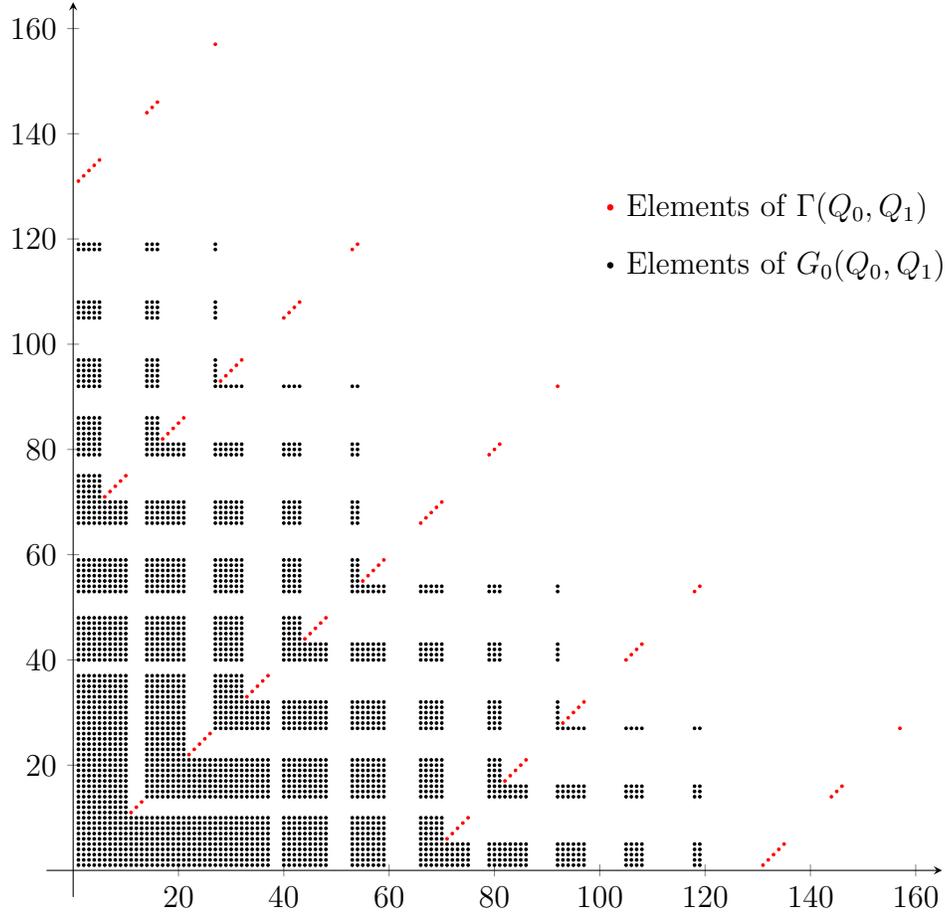


\caption{Determination of $G_0(Q_0, Q_1)$ from $\Gamma(Q_0, Q_1)$ for $q=4$ and $n=3$.}\label{figure1}
\end{figure}

To finish this section, we construct two-point AG codes using Proposition \ref{puregaps} and Theorem \ref{TorresCarvalho}.

\begin{proposition}\label{CodesinQinfQ}
Let $q\geq 4$ be even and $n\geq 3$ be an odd integer. Then
for $$\floor*{\frac{q^{n+2}-2q^{n+1}-q^3+q^2+1}{2q^3-3q-1}}+1\leq a\leq \frac{q^n-2q-1}{q+1}$$ it exists a $[N, k, d]$-code over $\fqsn$ with parameters
\begin{align*}
&N=q^{2n+1}-q^{n+2}+2q^{n+1}-1, \\
&k=q^{2n+1}-q^{n+2}+\frac{5q^{n+2}+q^n-q^3+q^2-2q+2}{2(q+1)}-a(2q^2-2q-1), \text{ and}\\
&d\geq 2a(q^2-q-1)-\frac{q^2(q^n-2q^{n-1}-q^{n-2}-q+1)}{q+1}.
\end{align*}
\end{proposition}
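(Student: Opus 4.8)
The plan is to realize the code as a differential two-point code $C_\Omega(D,G)$ on $\fqsn(x,y)$ and to bound its minimum distance through Theorem \ref{TorresCarvalho}, feeding in the pure gaps at $(Q_\infty,Q)$ supplied by item (i) of Proposition \ref{puregaps}. Write $s=\frac{q^n+1}{q+1}$. For a fixed $a$ in the prescribed range I would apply Proposition \ref{puregaps}(i) with the \emph{degenerate} rectangle in which the $Q_\infty$-coordinate is frozen: set
$$\alpha_1=\beta_1=(q^n+1)(q-1)-(s-a)(q^2-q),\qquad \alpha_2=1,\qquad \beta_2=s-1-a.$$
Since $0\le a\le s-2$, item (i) guarantees that every pair $(\gamma_1,\gamma_2)$ with $\alpha_1\le\gamma_1\le\beta_1$ and $\alpha_2\le\gamma_2\le\beta_2$ lies in $G_0(Q_\infty,Q)$ (here $\gamma_1$ is forced to equal $\alpha_1$, while $\gamma_2$ runs over $1,\dots,s-1-a$). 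The essential observation is that for each fixed $a$ the pure gaps of Proposition \ref{puregaps}(i) form a vertical segment rather than a genuine box, so the natural input to Theorem \ref{TorresCarvalho} is exactly this degenerate rectangle with $\beta_1=\alpha_1$.

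Next I would take $G=(\alpha_1+\beta_1-1)Q_\infty+(\alpha_2+\beta_2-1)Q$ and let $D$ be the sum of all remaining rational places of $\fqsn(\cY_{q^n+1})$. Using $(q^n+1)=(q+1)s$ one simplifies $\alpha_1=(q-1)s+a(q^2-q)$, whence
$$\deg(G)=(2\alpha_1-1)+(s-1-a)=(2q-1)s+a(2q^2-2q-1)-2,$$
while the place-count formula for $\#\cY_m(\fqsn)$ with $m=q^n+1$ gives $N=\deg(D)=q^{2n+1}-q^{n+2}+2q^{n+1}-1$ after deleting $Q_\infty$ and $Q$ from the support.

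Then I would verify the hypotheses $2g-2<\deg(G)<N$ of Proposition \ref{goppabound}. With $2g=(q^2-1)s-q^2+q+1$, the inequality $\deg(G)>2g-2$ is equivalent to $a(2q^2-2q-1)>(q^2-2q)s-q^2+q+1$; clearing the denominator $q+1$ turns the right-hand side into $\frac{q^{n+2}-2q^{n+1}-q^3+q^2+1}{q+1}$, and since $(q+1)(2q^2-2q-1)=2q^3-3q-1$ this is precisely the asserted lower bound $a\ge\floor*{\frac{q^{n+2}-2q^{n+1}-q^3+q^2+1}{2q^3-3q-1}}+1$. The upper bound $a\le s-2=\frac{q^n-2q-1}{q+1}$ is just the nonemptiness condition $\beta_2\ge\alpha_2$, and $\deg(G)<N$ holds throughout since $\deg(G)=O(q^{n+1})$ whereas $N=O(q^{2n+1})$. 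Proposition \ref{goppabound} then yields $k=N-\deg(G)-1+g$, and collecting terms over $2(q+1)$ produces the stated dimension.

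Finally, Theorem \ref{TorresCarvalho} applied to this rectangle gives
$$d\ge \deg(G)-(2g-2)+(\beta_1-\alpha_1)+(\beta_2-\alpha_2)+2=\deg(G)-(2g-2)+(s-2-a)+2,$$
using $\beta_1-\alpha_1=0$ and $\beta_2-\alpha_2=s-2-a$; expanding $\deg(G)$ and $2g$ and once more clearing $q+1$ recovers the claimed bound $d\ge 2a(q^2-q-1)-\frac{q^2(q^n-2q^{n-1}-q^{n-2}-q+1)}{q+1}$. The only genuine labor here is bookkeeping: the main obstacle is carrying the substitution $s=\frac{q^n+1}{q+1}$ consistently through each simplification, and in particular confirming that the floor-expression lower bound on $a$ is the exact integrality translation of $\deg(G)>2g-2$ rather than a weaker sufficient condition.
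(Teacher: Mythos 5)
Your proposal follows essentially the same route as the paper's proof: the same degenerate rectangle of pure gaps from Proposition \ref{puregaps}(i) with $\alpha_1=\beta_1=(q^n+1)(q-1)-(s-a)(q^2-q)$, $\alpha_2=1$, $\beta_2=s-1-a$, the same divisor $G$, the dimension via $k=N-\deg(G)-1+g$, and the distance via Theorem \ref{TorresCarvalho}, with the lower bound on $a$ arising exactly as the integrality translation of $\deg(G)>2g-2$. All the simplifications you indicate check out, so the argument is correct.
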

\begin{proof}
Consider $Q_{\infty}$ the only place at infinity of the function field $\fqsn(x, y)$ and $Q$  any other totally ramified place of degree one in the extension $\fqsn(x, y)/\fqsn(x)$. Let $s=\frac{q^n+1}{q+1}$. From item $i)$ in Proposition \ref{puregaps}, we have the following  pure gaps at $Q_{\infty}, Q$ for the values of $b=1$ and $b=s-1-a$
\begin{equation}\label{firstpuregaps}
((q^n+1)(q-1)-(s-a)(q^2-q), 1)\text{ and }((q^n+1)(q-1)-(s-a)(q^2-q), s-1-a).
\end{equation}
Define the divisors
$$
G:=(2(q^n+1)(q-1)-2(s-a)(q^2-q)-1)Q_{\infty}+(s-1-a)Q,
$$
and
$$
D:=\sum_{Q'\in \cY_{q^n+1}(\fqsn)\setminus \{Q_{\infty}, Q\}}Q'.
$$
 Then $\deg (G)<\deg (D)=q^{2n+1}-q^{n+2}+2q^{n+1}-1$ and 
\begin{align*}
\deg(G)&=2(q^{n+1}-q^n+q-2)-(s-a)(2q^2-2q-1)\\
&>2(q^{n+1}-q^n+q-2)-\left(s-\frac{q^{n+2}-2q^{n+1}-q^3+q^2+1}{(q+1)(2q^2-2q-1)}\right)(2q^2-2q-1)\\
&=2(q^{n+1}-q^n+q-2)-s(2q^2-2q-1)+\frac{q^{n+2}-2q^{n+1}-q^3+q^2+1}{q+1}\\
&=\frac{q^{n+2}-q^n-q^3+q^2-2}{q+1}\\
&=2g(\cY_{q^n+1})-2.
\end{align*}
So the AG code $C_{\Omega}(D, G)$ has dimension
\begin{align*}
k_{\Omega}&= \deg(D)+g-1-\deg (G)\\
&=q^{2n+1}-q^{n+2}+\frac{5q^{n+2}+q^n-q^3+q^2-2q+2}{2(q+1)}-a(2q^2-2q-1).
\end{align*}
Since $((q^n+1)(q-1)-(s-a)(q^2-q), b)$ is a pure gap at $Q_\infty, Q$ for any $1\leq b \leq s-1-a$, from Proposition \ref{puregaps} and Theorem \ref{TorresCarvalho}, it follows that the AG code $C_{\Omega}(D, G)$ has minimum distance
\begin{align*}
d_{\Omega}&\geq \deg (G)-(2g-2)+s-a\\
&=2a(q^2-q-1)-\frac{q^2(q^n-2q^{n-1}-q^{n-2}-q+1)}{q+1}.
\end{align*}
\end{proof}

\begin{proposition}\label{CodesinQ0Q1}
Let $n\geq 3$ be an odd integer and $u=2^n$. For $\frac{4u^2+9}{5}\leq c_1 \leq \frac{11u^2+4}{12}$ and $\frac{4u^2+9}{5}\leq c_2 \leq  \frac{5u^2+4}{6}$, there exists an AG code over $\mathbb{F}_{u^4}$ with parameters
$$
\left[4u^4-8u^2-1, 4u^4-\frac{33u^2}{4}-5-c_1-c_2, d\geq \frac{u^2}{2}+12\right].
$$
\end{proposition}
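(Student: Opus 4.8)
The plan is to specialize the two-point construction at $Q_0,Q_1$ to the case $q=4$, for which $\fqsn=\mathbb{F}_{q^{2n}}=\mathbb{F}_{4^{2n}}=\mathbb{F}_{u^4}$ with $u=2^n$, and then feed a maximal rectangle of pure gaps into Theorem \ref{TorresCarvalho}. First I would record the numerical data coming from the previous sections specialized at $q=4$: writing $M=(q^n-2q^{n-1}+1)/(q-1)=(u^2+2)/6$ as in Proposition \ref{puregaps}, the genus becomes $g=(3u^2-8)/2$, and the place count $\#\cY_{q^n+1}(\fqsn)=4u^4-8u^2+1$ gives $N=\deg(D)=4u^4-8u^2-1$ after deleting $Q_0$ and $Q_1$ from the support. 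I would also note that the hypothesis ``$n$ odd'' is exactly what makes $\tfrac{4u^2+9}{5}$ an integer, since $4^n\equiv 4\pmod 5$ for odd $n$, whereas $\tfrac{11u^2+4}{12}$ and $\tfrac{5u^2+4}{6}$ are integral for every $n$.

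The heart of the argument is to exhibit the rectangle. I would fix the upper corner $\beta_1:=\tfrac{11u^2+4}{12}=5M+\tfrac{u^2-16}{12}$ and $\beta_2:=\tfrac{5u^2+4}{6}=5M-1$, which correspond to taking $a_1=a_2=q+1=5$ and the extreme admissible values of $b_1,b_2$ in Proposition \ref{puregaps}, and I would set $\alpha_1:=c_1$, $\alpha_2:=c_2$. The claim is that $[\alpha_1,\beta_1]\times[\alpha_2,\beta_2]$ consists entirely of pure gaps at $Q_0,Q_1$. To verify this, apply Proposition \ref{puregaps} with $a_1=a_2=5$, writing $B:=(q^2-q-10)\tfrac{q^{n-1}-1}{q^2-1}=\tfrac{u^2-4}{30}$. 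Item (ii) produces the points $(5M-b_2,\,5M-b_1)$ with $b_1,b_2$ ranging independently over $[1,B]$, hence the solid sub-rectangle $[5M-B,5M-1]\times[5M-B,5M-1]$; the family of item (iii) carrying $+b_1$ in the first coordinate produces $(5M+b_1,\,5M-b_2)$ with $b_1\in[0,\tfrac{u^2-16}{12}]$ and $b_2\in[1,B]$, hence the solid rectangle $[5M,\beta_1]\times[5M-B,5M-1]$. The key arithmetic identity is $5M-B=\tfrac{4u^2+9}{5}$, which is exactly the lower bound imposed on $c_1,c_2$; since $5M-1$ and $5M$ are consecutive integers, the two rectangles tile contiguously in the first coordinate, and their union is the pure-gap region $[\tfrac{4u^2+9}{5},\beta_1]\times[\tfrac{4u^2+9}{5},\beta_2]$. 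As $\alpha_i=c_i\geq\tfrac{4u^2+9}{5}$ and $c_i\leq\beta_i$, the desired rectangle sits inside this region.

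With the rectangle in hand I would take $G=(\alpha_1+\beta_1-1)Q_0+(\alpha_2+\beta_2-1)Q_1$ and $D=\sum_{Q'\neq Q_0,Q_1}Q'$, check that $2g-2<\deg(G)<N$ (immediate, since $\deg(G)=O(u^2)$ while $N=O(u^4)$, and the lower bound on $c_i$ forces $\deg(G)>2g-2$), and invoke Proposition \ref{goppabound} to get $k=k_\Omega=N-\deg(G)-1+g$. Using $\beta_1+\beta_2=\tfrac{7u^2+4}{4}$ one finds $\deg(G)=c_1+c_2+\tfrac{7u^2-4}{4}$, and substitution gives $k=4u^4-\tfrac{33u^2}{4}-5-c_1-c_2$. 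For the distance, Theorem \ref{TorresCarvalho} yields $d_\Omega\geq\deg(G)-(2g-2)+(\beta_1-\alpha_1)+(\beta_2-\alpha_2)+2$; here the $c_1,c_2$ dependence cancels, leaving $d_\Omega\geq 2\beta_1+2\beta_2-(2g-2)=\tfrac{u^2}{2}+12$. The bound is uniform over the admissible range of $c_1,c_2$ precisely because $\beta_1,\beta_2$ are held fixed at the top corner.

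The main obstacle is the pure-gap tiling: one must verify that the two families of Proposition \ref{puregaps} specialized at $a_1=a_2=5$ glue into a single solid rectangle with no holes, and do not spill below $5M-B$, where the pure-gap blocks for distinct values of $a$ are separated by genuine strips of non-pure-gaps. This is exactly why the cutoff $\tfrac{4u^2+9}{5}=5M-B$ appears in the hypotheses; establishing the identity $5M-B=\tfrac{4u^2+9}{5}$ together with the independence of the $b_1,b_2$ ranges is the crucial (if elementary) step, after which the extraction of $N$, $k$ and $d$ is routine simplification.
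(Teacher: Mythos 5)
Your proposal is correct and follows essentially the same route as the paper: specialize Proposition \ref{puregaps} at $q=4$ with $a_1=a_2=5$ to obtain the solid pure-gap rectangle $[5M-2R,\,5M+b]\times[5M-2R,\,5M-1]$ (your $B$ equals the paper's $2R$, and the identity $5M-2R=\tfrac{4u^2+9}{5}$ is exactly the paper's lower cutoff), take $G=(5M+b+c_1-1)Q_0+(5M+c_2-2)Q_1$, and apply Theorem \ref{TorresCarvalho} together with the Riemann--Roch dimension count after checking $2g-2<\deg(G)<N$. All of your numerical specializations ($g=(3u^2-8)/2$, $N=4u^4-8u^2-1$, $\deg(G)=c_1+c_2+\tfrac{7u^2-4}{4}$, and the cancellation of $c_1,c_2$ in the distance bound) agree with the paper's computations.
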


\begin{proof}
Let $M=\frac{u^2+2}{6}$, $R=\frac{u^2-4}{60}$, and $b=\frac{u^2-16}{12}$. For $q=4$ and $a_1=a_2=q+1$ in items $ii)$ and $iii)$ of Proposition \ref{puregaps}, we deduce that the elements of the set
$$
\left\{(n_1, n_2)\in \N^2 \mid 5M-2R\leq n_1\leq 5M+b, \, 5M-2R\leq n_2\leq 5M-1\right\}
$$
are pure gaps at $Q_0, Q_1$, where $Q_0, Q_1$ are the totally ramified places in $\mathbb{F}_{u^4}(\cY_{u^2+1})/\mathbb{F}_{u^4}(x)$ distinct from $Q_{\infty}$. Consider the pairs
\begin{equation}\label{secondpuregaps}
(c_1, c_2)\quad\text{and}\quad(5M+b, 5M-1),
\end{equation}
where $5M-2R\leq c_1\leq 5M+b$ and $5M-2R\leq c_2\leq 5M-1$, and the divisors
$$
G:=(5M+b+c_1-1)Q_0+(5M+c_2-2)Q_1
$$
and
$$
D:=\sum_{Q'\in \cY_{u^2+1}(\mathbb{F}_{u^4})\setminus \{Q_0, Q_1\}}Q'.
$$
The pairs given in (\ref{secondpuregaps}) are pure gaps at $Q_0, Q_1$ and satisfy the conditions of Theorem \ref{TorresCarvalho}. Furthermore, since $\deg(G)<\deg(D)=4u^4-8u^2-1$ and
\begin{align*}
\deg (G)=10M+c_1+c_2+b-3
\geq 20M-4R+b-3
>2g(\cY_{u^2+1})-2,
\end{align*}
we conclude that the AG code $C_{\Omega}(D, G)$ over $\mathbb{F}_{u^4}$ has dimension
\begin{align*}
k= \deg(D)+g-1-\deg (G)
=4u^4-\frac{33u^2}{4}-5-c_1-c_2,
\end{align*}
and minimum distance satisfying 
\begin{align*}
d\geq \deg (G)-(2g-2)+1+b+10M-c_1-c_2
=\frac{u^2}{2}+12.
\end{align*}
\end{proof}

\section{Some tables of codes}
In this section, we compare the relative parameters of two-point AG codes over the function field $\fqsn(\cY_{q^n+1})$ obtained in Propositions \ref{CodesinQinfQ} and \ref{CodesinQ0Q1}, with the relative parameters of one-point AG codes over the same function field obtained using the order bound.

Let $P$ be a rational place in a function field $F/\fq$ and set 
$$
H(P)=\{\rho_1 := 0 < \rho_2 <\dots\}
$$
the Weierstrass semigroup at $P$. The {\it Feng-Rao designed minimum distance} or the {\it order bound} of $H(P)$ is defined by the function $d_{ORD}: \N\rightarrow \N$ given by
$$
d_{ORD}(\ell):=\min\{\nu_m\mid m\geq \ell\},
$$
where $\nu_{\ell}:=\#\{(i, j)\in \N^2 \mid \rho_i + \rho_j =\rho_{\ell+1}\}$. In general, we have that $d_{ORD}(\ell)\geq \ell+1-g$ and the equality holds if $\rho_{\ell}\geq 4g-1$, see \cite[Theorem 5.24]{HVP1998}. For one-point differential AG codes, we can use the order bound for obtain a lower bound for the minimum distance.   
\begin{theorem}\cite[Theorem 4.13]{HVP1998}\label{orderbound}
Consider the one-point code $C_{\ell}:=C_{\cL}(P_1+\dots+P_N, \rho_{\ell}P)^{\perp}$, where $P, P_1, \dots, P_N$ are distinct $\fq$-rational places in $F$ and $\rho_{\ell}\in H(P)$ is such that $N>\rho_{\ell}$. Then $C_{\ell}$ is an $[N, N-\ell, \geq d_{ORD}(\ell)]$-code over $\fq$. 
\end{theorem}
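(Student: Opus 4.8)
The plan is to establish the three parameters separately, disposing of the length and dimension quickly and then devoting the main effort to the designed minimum distance, which is the classical order (Feng--Rao) bound. The length is $N$ by construction. For the dimension, I would use the duality $C_{\cL}(D,\rho_\ell P)^{\perp}=C_{\Omega}(D,\rho_\ell P)$ together with the hypothesis $N>\rho_\ell=\deg(\rho_\ell P)$: since a nonzero $f\in\cL(\rho_\ell P)$ has at most $\rho_\ell<N$ zeros among $P_1,\dots,P_N$, the evaluation map $\cL(\rho_\ell P)\to\fq^N$ is injective, whence $\dim C_{\cL}(D,\rho_\ell P)=\ell(\rho_\ell P)$. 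As the non-gaps not exceeding $\rho_\ell$ are exactly $\rho_1,\dots,\rho_\ell$, this dimension equals $\ell$, and dualizing gives $\dim C_\ell=N-\ell$.

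For the distance, I would fix for each $i$ a function $f_i\in F$ whose only pole is $P$, of order exactly $\rho_i$, so that $f_1,\dots,f_m$ form a basis of $\cL(\rho_m P)$ for every $m$, and write $\mathbf f_i=(f_i(P_1),\dots,f_i(P_N))$. Given a nonzero codeword $c=(c_1,\dots,c_N)$ of weight $w$, the defining property $c\perp\mathbf f_i$ for $i\le\ell$ together with $c\neq 0$ produces, after observing that the $\mathbf f_t$ eventually span $\fq^N$, a smallest index $m+1>\ell$ with $\langle c,\mathbf f_{m+1}\rangle\neq 0$ and $\langle c,\mathbf f_t\rangle=0$ for all $t\le m$; in particular $m\ge\ell$. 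The engine of the argument is the syndrome matrix $S=(S_{ij})$ with $S_{ij}=\sum_{k=1}^N c_k f_i(P_k)f_j(P_k)$. Because $f_if_j$ has pole order exactly $\rho_i+\rho_j$ at $P$, expanding it in the basis $(f_t)$ shows $S_{ij}=0$ whenever $\rho_i+\rho_j<\rho_{m+1}$, whereas $S_{ij}$ equals a nonzero multiple of $\langle c,\mathbf f_{m+1}\rangle$, namely its leading coefficient times that syndrome, whenever $\rho_i+\rho_j=\rho_{m+1}$.

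I would then bound the rank of $S$ from both sides. Listing the $\nu_m$ pairs $(i,j)$ with $\rho_i+\rho_j=\rho_{m+1}$ in order of increasing first coordinate yields a $\nu_m\times\nu_m$ submatrix of $S$ that is triangular with nonzero diagonal entries, so $\mathrm{rank}(S)\ge\nu_m$. On the other hand, writing $S=F^{\top}\mathrm{diag}(c)\,F$ with $F=(f_i(P_k))$ exhibits $S$ as a sum of $w$ rank-one matrices indexed by the support of $c$, so $\mathrm{rank}(S)\le w$. Combining the two inequalities gives $w\ge\nu_m\ge\min_{m'\ge\ell}\nu_{m'}=d_{ORD}(\ell)$, which is the asserted bound.

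The main obstacle I anticipate is the bookkeeping in the rank-lower-bound step: one must verify that, under the chosen ordering, the entries of the submatrix on one side of the diagonal vanish (using that $\rho_i+\rho_j<\rho_{m+1}$ forces $S_{ij}=0$) while every diagonal entry is genuinely nonzero. This last point rests on the fact that the product of two functions of pole orders $\rho_i$ and $\rho_j$ has pole order exactly $\rho_i+\rho_j$, so its expansion in $(f_t)$ carries a nonzero coefficient on $f_{m+1}$. A second, smaller point requiring care is the existence of the index $m\ge\ell$ with $\langle c,\mathbf f_{m+1}\rangle\neq 0$, i.e. that the evaluation vectors $\mathbf f_t$ span $\fq^N$; this follows from a short Riemann--Roch computation, since for $\deg(tP)$ large enough the code $C_{\cL}(D,tP)$ fills all of $\fq^N$.
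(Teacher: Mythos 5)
The paper gives no proof of this statement---it is quoted directly from \cite[Theorem 4.13]{HVP1998}---so there is no in-paper argument to compare against. Your proof is correct and is essentially the standard Feng--Rao argument found in that reference: the length is immediate, the dimension follows from injectivity of the evaluation map on $\cL(\rho_\ell P)$ (using $N>\rho_\ell$ and $\ell(\rho_\ell P)=\ell$) together with duality, and the distance bound comes from sandwiching the rank of the syndrome matrix $S=F^{\top}\mathrm{diag}(c)\,F$ between $\nu_m$ (via the triangular submatrix supported on the pairs with $\rho_i+\rho_j=\rho_{m+1}$, whose diagonal entries are nonzero because $f_if_j$ has pole order exactly $\rho_i+\rho_j$ at $P$) and the weight $w$, with $m\ge\ell$ guaranteed because the evaluation vectors $\mathbf f_t$ eventually span $\fq^N$.
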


For $Q$ a totally ramified place in the extension $\fqsn(\cY_{q^n+1})/\fqsn(x)$ such that $Q\neq Q_{\infty}$, we can use the description of the gap set $G(Q)$ given in Proposition \ref{prop2} and the order bound described in Theorem \ref{orderbound} to obtain one-point AG codes over the function field $\fqsn(\cY_{q^n+1})$. In Table \ref{table1}, using the package NumericalSgps \cite{DGM2022} of the software GAP \cite{GAP4}, we present parameters of 
one-point AG codes over $\fqsn(\cY_{q^n+1})$ for $q=4$ and $n=3$. These codes have length $N=15872$ and are defined over $\mathbb{F}_{2^{12}}$.

On the other hand, in Table \ref{table2} we present two-point AG codes over $\mathbb{F}_{2^{12}}$ of length $N=15871$  obtained from Proposition \ref{CodesinQinfQ} (for $q=4$ and $n=3$) and Proposition \ref{CodesinQ0Q1} (for $n=3$). In all cases we obtain better relative parameters with respect to the one-point AG codes obtained on Table \ref{table1}.

\vspace{1cm}
\hspace{-1cm}
\begin{minipage}[c]{0.5\textwidth}
\centering
\begin{tabular}
{
|>{\centering\arraybackslash}p{1cm}|>{\centering\arraybackslash}p{0.8cm} |>{\centering\arraybackslash}p{0.9cm} |
}
\hline
$k$ & $\rho_{\ell}$ & $d\geq$ \\ \hline 
$ 15620 $ & $ 343 $ & $ 160 $ \\ \hline
$ 15643 $ & $ 320 $ & $ 137 $ \\ \hline
$ 15666 $ & $ 297 $ & $ 114 $ \\ \hline
$ 15689 $ & $ 274 $ & $ 91 $ \\ \hline
$ 15712 $ & $ 251 $ & $ 72 $ \\ \hline
$ 15735 $ & $ 228 $ & $ 50 $ \\ \hline
$ 15741 $ & $ 222 $ & $ 39 $ \\ \hline
$ 15742 $ & $ 221 $ & $ 39 $ \\ \hline
$ 15743 $ & $ 220 $ & $ 39 $ \\ \hline
$ 15744 $ & $ 219 $ & $ 39 $ \\ \hline
$ 15745 $ & $ 218 $ & $ 39 $ \\ \hline
$ 15758 $ & $ 205 $ & $ 30 $ \\ \hline
\end{tabular}
\captionof{table}{One-point AG codes over $\mathbb{F}_{2^{12}}$ of length $N=15872$.}\label{table1}
\end{minipage}
\hfill
\begin{minipage}[c]{0.5\textwidth}
\centering
\begin{tabular}
{
|>{\centering\arraybackslash}p{1cm}|>{\centering\arraybackslash}p{0.8cm} |>{\centering\arraybackslash}p{5cm} |
}
\hline
$k$ & $d\geq $ & Reference \\ \hline 
$ 15620 $ & $ 162 $ & Prop. \ref{CodesinQinfQ}: $a=11$ \\ \hline
$ 15643 $ & $ 140 $ & Pro. \ref{CodesinQinfQ}: $a=10$ \\ \hline
$ 15666 $ & $ 118 $ & Prop. \ref{CodesinQinfQ}: $a=9$\\ \hline
$ 15689 $ & $ 96 $ & Prop. \ref{CodesinQinfQ}: $a=8$\\ \hline
$ 15712 $ & $ 74 $ & Prop. \ref{CodesinQinfQ}: $a=7$\\ \hline
$ 15735 $ & $ 52 $ & Prop. \ref{CodesinQinfQ}: $a=6$\\ \hline
$ 15741 $ & $ 44 $ & Prop. \ref{CodesinQ0Q1}: $c_1=56, \, c_2=54$ \\ \hline
$ 15742 $ & $ 44 $ & Prop. \ref{CodesinQ0Q1}: $c_1=55, \, c_2=54$ \\ \hline
$ 15743 $ & $ 44 $ & Prop. \ref{CodesinQ0Q1}: $c_1=c_2=54$\\ \hline
$ 15744 $ & $ 44 $ & Prop. \ref{CodesinQ0Q1}: $c_1=53, \, c_2=54$\\ \hline
$ 15745 $ & $ 44 $ & Prop. \ref{CodesinQ0Q1}: $c_1=c_2=53$\\ \hline
$ 15758 $ & $ 30 $ & Prop. \ref{CodesinQinfQ}: $a=5$ \\ \hline
\end{tabular}
\captionof{table}{Two-point AG codes over $\mathbb{F}_{2^{12}}$ of length $N=15871$.}\label{table2}
\end{minipage}

\bibliographystyle{abbrv}



\end{document}